\definecolor{mygreen}{RGB}{10, 128, 128}
\definecolor{myblue}{RGB}{0, 0, 0}
\newcolumntype{P}[1]{>{\centering\arraybackslash}p{#1}}
\newcolumntype{M}[1]{>{\centering\arraybackslash}m{#1}}
\newcommand{\s}{\mathcal{S}}
\newcommand{\GG}{\mathcal{G}}
\newcommand{\MMM}{\mathcal{M}}
\newcommand{\OO}{\mathcal{O}}
\newcommand{\LL}{\mathcal{L}}
\newcommand{\BB}{\mathcal{B}}
\newcommand{\NN}{\mathcal{N}}
\newcommand{\LLL}{ \tilde{\mathcal{L}} }
\newcommand{\BBB}{ \tilde{\mathcal{B}} }
\newcommand{\NNN}{ \tilde{\mathcal{N}} }
\newcommand{\BBBB}{ \hat{\mathcal{B}} }
\newcommand{\R}{\mathbb{R}}
\newcommand{\M}{$\mu$G}
\newcommand{\MM}{multi-$\mu$G}
\newtheorem{theorem}{Theorem}
\newtheorem{prop}{Proposition}
\renewcommand\nomgroup[1]{%
  \item[\bfseries
  \ifstrequal{#1}{A}{Sets and Indices}{%
  \ifstrequal{#1}{P}{Parameters}{%
  \ifstrequal{#1}{V}{Variables}{}}}%
]}
\begin{document}
%
\title{\textcolor{myblue}{Towards} Resilient Operation of Multi-Microgrids:\\ {\huge An MISOCP-Based Frequency-Constrained Approach}}
\author{Amin~Gholami,~\IEEEmembership{Student Member,~IEEE,}
        and~Xu~Andy~Sun,~\IEEEmembership{Senior Member,~IEEE}
\thanks{The authors are with the H. Milton Stewart School of Industrial and Systems Engineering, Georgia Institute of Technology, Atlanta, GA 30332 USA (e-mail: \texttt{a.gholami{@}gatech.edu}; \texttt{andy.sun{@}isye.gatech.edu}).}}
\maketitle

\begin{abstract}
High penetration of distributed energy resources (DERs) is transforming the paradigm in power system operation. The ability to provide electricity to customers while the main grid is disrupted has introduced the concept of microgrids (\M s) with many challenges and opportunities. Emergency control of dangerous transients caused by the transition between the grid-connected and island modes in \M s is one of the main challenges in this context. To address this challenge, this paper proposes a comprehensive optimization and
real-time control framework for maintaining frequency stability of multi-\M ~networks under an islanding event and for achieving optimal load shedding and network topology control with AC power flow constraints. The paper also develops a strong mixed-integer second-order cone programming (MISOCP)-based reformulation and a cutting plane algorithm for scalable computation. We believe this is the first time in the literature that such a framework for multi-\M ~network control is proposed, and its effectiveness is demonstrated with extensive numerical experiments.

\end{abstract}

\begin{IEEEkeywords}
Islanding, microgrid, mixed-integer second order cone programming, resilience, under frequency load shedding.
\end{IEEEkeywords}


 \section{Introduction} \label{sec: Introduction}
 \IEEEPARstart{M}{icrogrids} (\M s), as building blocks of smart distribution grids, provide a unique infrastructure for integrating a wide range of distributed energy resources (DERs) with different static and dynamic characteristics. They are able to operate in island mode and energize a portion of the grid while the main grid is down. This islanding capability of \M s is highly beneficial for both customers and electric utilities, especially in areas with frequent electrical outages. Although dynamic islanding is one of the basic objectives of building a \M, IEEE Std. 929-2000 \cite{IEEE-Std-929-2000-2000} and IEEE Std. 1547.7-2013 \cite{IEEE-Std-1547-2013} mandate that DERs shall detect the unintentional island mode and cease to energize the grid within two seconds, mainly due to safety concerns as well as complying with conventional control/protection schemes. Operation of DERs during intentional islanding has also been under consideration for future revisions of IEEE Std. 1547. Based on the current practices and standards, blackouts in \M s seem inevitable in the event of islanding (especially an unscheduled islanding which may occur subsequent to detection of abnormal conditions at the interconnection(s)). 
 
Intuitively, the disconnection of DERs is not an ideal solution, particularly in a restructured environment where electric utilities compete to provide a more reliable service to customers. In this context, a recent draft standard for interoperability of DERs in 2017 has provided some guidance on scheduled and unscheduled islanding processes \cite{IEEEStd-2017-Draft}. This draft standard defines an intentional local island as any portion of the grid that is totally within the bounds of a local power grid (e.g., a \M), and further states that DERs may have to adjust several settings which shall be enabled only when the intentional island is isolated from the main grid. This standard calls for adaptive protection and control schemes to be used in such circumstances. Our paper is motivated by this need, and is aimed at providing a practical solution to the islanding process in modern distribution networks which are comprised of multiple \M s, referred to as multi-microgrid (\MM) networks.

In a similar vein, \cite{Mahat-2010-UFLS,Iravani-2005-microgrid,balaguer-2011-intentional-Islanding} acknowledge that the current practice of disconnecting DERs following a disturbance is no longer a reliable solution. Specifically, reference \cite{Mahat-2010-UFLS} proposes an  under frequency load shedding (UFLS) scheme to be used subsequent to islanding in a distribution system. This scheme sheds an optimal number of loads based on a set of criteria including frequency, rate of change of frequency, customers’ willingness to pay, and load histories. The authors in \cite{Iravani-2005-microgrid} investigate autonomous operation of a distribution system as an individual \M. The paper demonstrates the transient behavior of such a \M ~due to preplanned and unplanned islanding processes. The authors also emphasize that future studies should develop control strategies/algorithms for multiple electronically interfaced DERs to achieve optimum response in terms of stability. In \cite{balaguer-2011-intentional-Islanding}, a controller for distributed generation (DG) inverters is designed for both grid-connected and intentional islanding modes. Moreover, an islanding-detection algorithm is developed in order to switch between the two modes. 

On the other hand, the operation of \MM s has been studied in the literature from different perspectives, such as their on-line dynamic security assessment \cite{Xie-2015-onlineSecurity}, interactive control for guaranteed small signal stability \cite{Xie-2016-interactive}, transient stability assessment \cite{Xie-2016-transient}, electricity market operator design \cite{VincentPoor-2015-multimicrogridMarket}, hierarchical outage management \cite{farzin-2016-MultiMicrogridResilience}, and self-healing \cite{JianhuiWang-2016-selfhealing} to name a few.

In this paper, we propose a novel framework for the resilient operation of multi-\M ~networks after a scheduled or unscheduled islanding in a distribution system. The framework is strategically designed in two parts. In the first part, we develop a near real-time decision support tool which is used to determine the optimal reconfiguration of the multi-\M ~network, cooperation between \M s (sharing their resources), new operating point of dispatchable DERs, and emergency load curtailments (if necessary). The second part of the framework pertains to the real-time monitoring and control of multi-\M s based on the outcomes of the decision support tool. The present paper is a significant extension to our recent work \cite{Amin-2018-HICSS} on a single \M ~operation. Specifically, the main contributions of this paper are summarized below.

\begin{itemize}
\item We formulate the real-time resilient operation, including optimal power flow, optimal load shedding, and optimal topology reconfiguration, of a multi-\M ~network as a mixed-integer nonlinear programming (MINLP) problem. Then, we propose a mixed-integer second order cone programming (MISOCP) relaxation to this problem, which considerably improves the computational efficiency of our control framework and renders it scalable in practical systems.
\item We derive necessary constraints for keeping the nadir and steady state frequency of the network within the permissible ranges, and introduce a new reformulation for frequency limitation constraints. This reformulation implicitly guarantees the frequency stability of the network after dangerous transients such as islanding. 
\item We develop a set of valid inequalities and a separation scheme for incorporating the frequency constraints in the operation of a multi-\M ~network, and based on that, we establish a cutting-plane approach to eliminate the frequency violations in a computationally effective way.
\end{itemize}


The rest of our paper is organized as follows. Section \ref{sec: self healing multi microgrid} introduces a resilient multi-\M ~network and gives an overview of the proposed scheme. The frequency response of multi-\M s to an islanding process is discussed in Section \ref{sec: Frequency response of multi microgrid}. In Section \ref{sec: Self healing problem formulation}, a basic MINLP model for the real-time resilient operation of \MM s is presented. Section \ref{Sec: Solution Methodology} is devoted to solution methodology, including the MISOCP relaxation and cutting plane algorithm. Section \ref{Sec: Computational Experiments} exhibits the efficiency of the novel approach using an illustrative case study, and finally, the paper concludes with Section \ref{Sec: Conclusions}.

\section{Resilient Operation of Multi-\M s} \label{sec: self healing multi microgrid} 
\subsection{Structure of a Multi-\M  \, Network}
A distribution network may experience a scheduled islanding due to several reasons such as enhanced reliability, economic dispatch decisions for self-supply, pre-emptive action prior to inclement weather, etc. Moreover, unscheduled islanding happens subsequent to the detection of abnormal conditions at the interconnection(s) \cite{IEEEStd-2017-Draft}. In either case, the distribution system can be further partitioned into multiple \M s, thereby improving the resilience of the system. 
Fig. \ref{Fig: Multi Microgrid} depicts a distribution network under such circumstances. As can be seen in this example, the distribution network is composed of four \M s, where each \M ~is connected  to the rest of the system through the point of common coupling (PCC). Note that \M s in a \MM ~network are commonly integrated via voltage-source-converter-(VSC)-based interfaces at the PCC, and the behavior of each \M ~is characterized by the control scheme of its interface \cite{majumder-2010-VSC-based}. PCCs are commonly equipped with intelligent electronic devices (IEDs) with synchrophasor capability \cite{Xie-2015-onlineSecurity}. \textcolor{myblue}{A communication network connects the IEDs to the distribution management system (DMS). Note that the resilience of this communication infrastructure (notably during an unscheduled outage) is of paramount importance to operators’ situational awareness.}

\textcolor{myblue}{In Fig. \ref{Fig: Multi Microgrid}(a),}
a set of buses (white fill in the figure), namely linking buses, are not categorized to any \M . Additionally, the lines (dashed/dotted in Fig. \ref{Fig: Multi Microgrid}(a), \textcolor{myblue}{or equivalently $l_1$ to $l_5$ in Fig. \ref{Fig: Multi Microgrid}(b)}) between such buses, namely linking lines, are equipped with switching relays, enabling various configurations for the \MM ~network. This portion of the distribution network that consists of the linking buses and linking lines is called the \textit{linking grid}.
\textcolor{myblue}{Fig. \ref{Fig: Multi Microgrid}(b) illustrates the linking grid associated with the \MM ~network of Fig. \ref{Fig: Multi Microgrid}(a).}
Finally, the buses by which each \M ~is connected to the linking grid (gray fill in the figure) are called boundary buses.
\begin{figure}[t]
 \includegraphics*[width=3.00in, keepaspectratio=true]{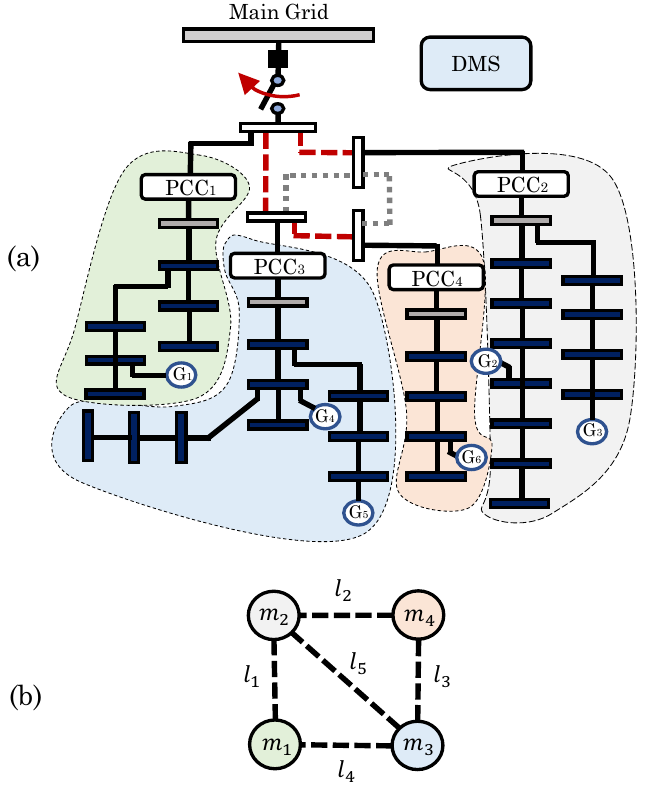}
 \centering
  \caption{ \textcolor{myblue}{Schematic diagram of a distribution system under islanding. (a) Multi-\M ~network. (b) Linking grid.} }
  \label{Fig: Multi Microgrid}
\end{figure}
\subsection{Overview of the Proposed Resilient Operation Scheme}
The general framework of the proposed resilience management scheme is illustrated in Fig. \ref{Fig: Two stage scheme}. This framework can be divided into two stages: i) near real-time decision support tool, and ii) real-time monitoring and control. In the first stage, the distribution system operator (DSO) leverages the state estimation (SE) module and obtains the input parameters of an optimization model. These data include the generation/consumption level of DERs/Loads, real and reactive power exchange at PCCs, and the status of the circuit breakers (i.e., network topology). Subsequently, the optimization model is solved and the following resilient operation strategies are determined: optimal configuration of the linking network, cooperation between \M s (sharing their DERs), new operating point of dispatchable DERs, and emergency load curtailments (if necessary). Note that the frequency limitations of the system are embedded in the optimization model to ensure the frequency stability of \MM s following the islanding event. In the next step, a look-up table is generated based on the results of the optimization model.
On the other side, in the second stage, the status of the main circuit breaker (i.e., the islanding status of the distribution network) is monitored using indication data. If an unscheduled/scheduled islanding happens, the pre-specified strategies will be implemented in the \MM ~network.

\textcolor{myblue}{The principal focus of this paper is on the first stage (left-hand side of Fig. \ref{Fig: Two stage scheme}), i.e., developing a near real-time decision support tool that will be thoroughly discussed in the following sections.
The second stage (right-hand side of Fig. \ref{Fig: Two stage scheme}) corresponds to the mechanisms for implementing such decisions. The details of these mechanisms, which are enabled by synchrophasor technology, go beyond the scope of this paper.}
%
\begin{figure} [t]
\includegraphics*[width=3.4 in, keepaspectratio=true]{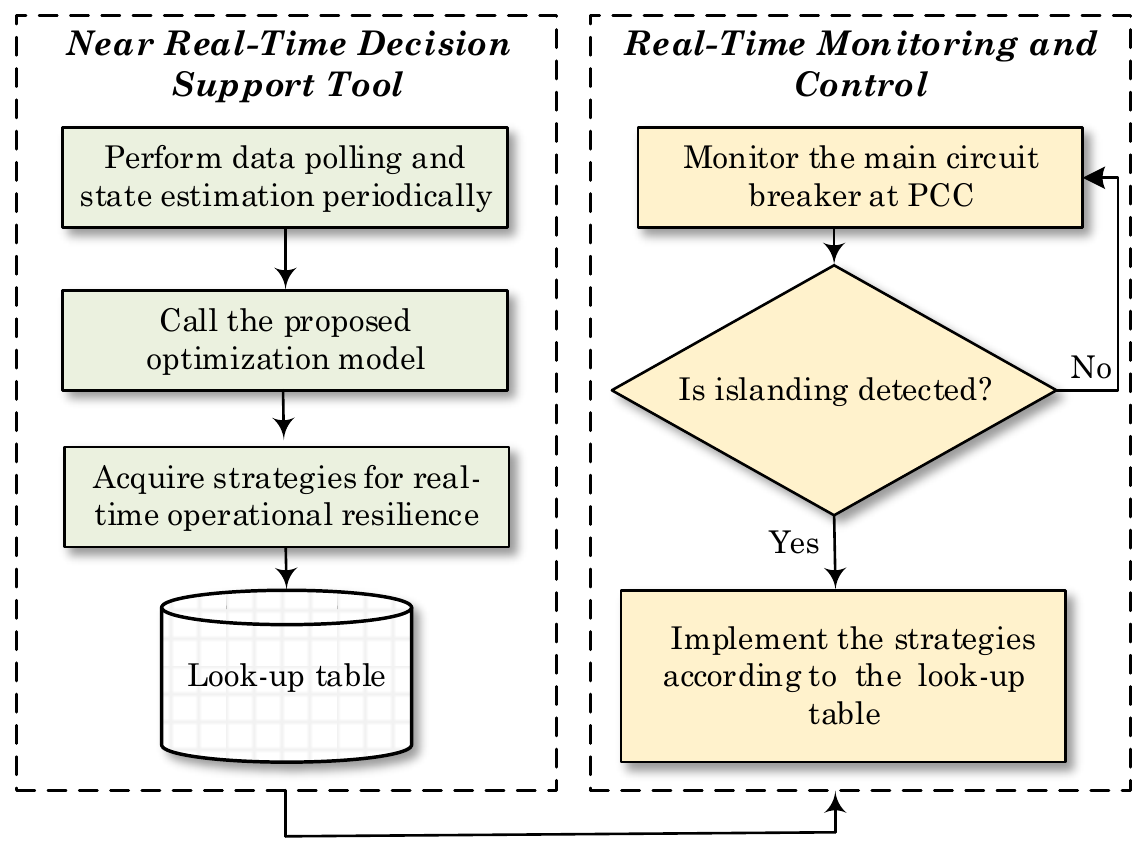}
    \centering
   \caption{The general framework of the proposed resilient operation approach.}
    \label{Fig: Two stage scheme}
\end{figure}
%
\section{Frequency Response of Multi-\M s Subsequent to Islanding} \label{sec: Frequency response of multi microgrid}
In this section, we will derive the steady-state and nadir frequencies of a \MM ~network subsequent to an imbalance between real power generation and consumption. Later in Section \ref{subsec: Frequency Constraints}, we will use these two metrics to construct our proposed frequency constraints, ensuring that they will remain in the permissible range during the transition between the grid-connected and island modes.
\subsection{Inertial Response}
 As mentioned earlier, \M s in a \MM ~network are integrated via VSC-based interfaces at the PCC. Meanwhile, VSC-based interfaces are controlled in such a way that they emulate the behavior of conventional synchronous machines \cite{Xie-2016-transient}. Inspired by this fact,
let \textcolor{myblue}{us} first focus on inertial response of \M s. Suppose $\MMM$ is the set of all \M s in the \MM ~network.
The artificial swing equation describes the inertial frequency dynamics of each $m \in \MMM$, 
\begin{equation} \label{eq: individual swing equation}
\frac{d\Delta\omega_m}{dt}=\frac{1}{2H_m} \left( {\Delta P^M_m - \Delta P^E_m } \right),
\end{equation}
where $\Delta\omega_m$ is the frequency deviation in p.u.; $H_m$ is the artificial inertia constant in seconds; $\Delta P^M_m$ and $\Delta P^E_m$ are the mechanical and electrical power deviations in p.u., respectively. Based on (\ref{eq: individual swing equation}), modeling interconnected \M s can be realized by the so-called aggregation method \cite{Kundur-1994-Stability-Book}. Without loss of generality, we assume that for each $m \in \MMM$, equation (\ref{eq: individual swing equation}) is per-unitized based on a common power, $S_{Base}$. We define the center of inertia (COI) frequency as
\begin{equation} \label{eq: COI frequency}
\omega_{COI}:=\frac{\sum \limits_{m \in \MMM} H_m \omega_m}{\sum \limits_{m \in \MMM} H_m }.
\end{equation}
\begin{prop} The swing equation of a fictitious equivalent generator whose frequency is equal to $\omega_{COI}$ has the same form as 
	\begin{equation} \label{eq: equivalent swing equation}
	\frac{d\Delta\omega_{COI}}{dt}=\frac{1}{2H_a} \left( {\Delta P^M_a - \Delta P^E_a } \right),
	\end{equation}	
 	where $H_a$, $\Delta P^M_a$, and $\Delta P^E_a$ are defined below
	\begin{equation} \label{eq: COI Inertia}
	H_a:= \sum \limits_{m \in \MMM} H_m,
	\end{equation}
	\begin{equation} \label{eq: COI mech and elec power}
	\Delta P^M_a:= \sum \limits_{m \in \MMM}  \Delta P^M_m,\; \Delta P^E_a:= \sum \limits_{m \in \MMM}  \Delta P^E_m.
	\end{equation}
\end{prop}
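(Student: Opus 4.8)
The plan is to derive the aggregate swing equation directly from the definition of the COI frequency by differentiating and substituting the individual dynamics. First I would rewrite the COI frequency in terms of deviations. Writing each machine frequency as a nominal value plus deviation, $\omega_m = \omega_0 + \Delta\omega_m$ with a common nominal $\omega_0$, and substituting into the definition (\ref{eq: COI frequency}), the nominal part factors out because $\sum_{m\in\MMM} H_m \omega_0 / \sum_{m\in\MMM} H_m = \omega_0$. This leaves the clean relation
\begin{equation*}
\Delta\omega_{COI} = \frac{\sum_{m\in\MMM} H_m \,\Delta\omega_m}{\sum_{m\in\MMM} H_m},
\end{equation*}
i.e., the COI deviation is the inertia-weighted average of the individual deviations.

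Next I would differentiate this expression with respect to time. Since the weights $H_m$ and their sum $H_a$ are constants, differentiation passes through the sum to give $\frac{d\Delta\omega_{COI}}{dt} = \frac{1}{H_a}\sum_{m\in\MMM} H_m \frac{d\Delta\omega_m}{dt}$. At this point I would invoke the individual swing equation (\ref{eq: individual swing equation}) to replace each derivative $\frac{d\Delta\omega_m}{dt}$ by $\frac{1}{2H_m}(\Delta P^M_m - \Delta P^E_m)$.

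The key simplification is that the factor $H_m$ multiplying each term then cancels against the $\frac{1}{2H_m}$ coming from the individual dynamics, leaving $\frac{1}{2H_a}\sum_{m\in\MMM}(\Delta P^M_m - \Delta P^E_m)$. Recognizing the two resulting sums as the aggregate mechanical and electrical power deviations defined in (\ref{eq: COI mech and elec power}), and the common denominator as $H_a$ from (\ref{eq: COI Inertia}), yields exactly (\ref{eq: equivalent swing equation}), establishing that the fictitious equivalent generator obeys a swing equation of the same form. I do not anticipate a genuine obstacle here, as the argument is a short linear computation; the only point requiring mild care is the first step, where one must justify that the per-unitization on a common base $S_{Base}$ (assumed just before the proposition) and the common nominal frequency are what make the weighted-average structure and the cancellation exact. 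Everything else is routine algebra.
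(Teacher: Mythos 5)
Your proposal is correct and follows essentially the same route as the paper's proof: differentiate the inertia-weighted COI definition, substitute the individual swing equations so the $H_m$ factors cancel, and identify the resulting sums with $H_a$, $\Delta P^M_a$, and $\Delta P^E_a$. The only cosmetic difference is that you factor out a common nominal $\omega_0$ explicitly, whereas the paper defines deviations from initial values and lets the constants vanish under differentiation.
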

%
\begin{proof}
A complete proof of this basic result cannot be easily located in the literature. Therefore, we provide one here. 
Consider a small deviation from the initial value in (\ref{eq: COI frequency}), i.e., $\Delta \omega_{COI} := \omega_{COI} - \omega_{COI}^0$ and $\Delta \omega_m := \omega_m - \omega_m^0 $, and take derivative of its both sides with respect to $t$:
\begin{equation} \label{eq: proof of swing 1}
\frac{d\Delta\omega_{COI}}{dt}=\frac{\sum \limits_{m \in \MMM} H_m \frac{d\Delta\omega_m}{dt}}{\sum \limits_{m \in \MMM} H_m }.
\end{equation}
Then, re-arrange (\ref{eq: individual swing equation}) as
\begin{equation} \label{eq: proof of swing 2}
H_m \frac{d\Delta\omega_m}{dt}=\frac{1}{2} \left( {\Delta P^M_m - \Delta P^E_m } \right).
\end{equation}
Now substitute (\ref{eq: proof of swing 2}) in (\ref{eq: proof of swing 1}), as 
\begin{equation} \label{eq: proof of swing 3}
\frac{d\Delta\omega_{COI}}{dt}=\frac{ \sum \limits_{m \in \MMM} \frac{1}{2} \left( {\Delta P^M_m - \Delta P^E_m } \right) }{ \sum \limits_{m \in \MMM} H_m }.
\end{equation}
With the definition of \eqref{eq: COI Inertia}-\eqref{eq: COI mech and elec power}, we get \eqref{eq: equivalent swing equation}.
\end{proof}
In the rest of the paper, the COI frequency is simply denoted by $\omega$ instead of $\omega_{COI}$.
\vspace{-5pt}
\subsection{Droop Response}
Now we construct the aggregated system frequency response (SFR) model of a \MM ~network as depicted in Fig. \ref{Fig: Aggregated SFR model}. In this model, the transfer function $\frac{1}{{2H_a s + D}}$ in the forward path represents the swing equation (\ref{eq: equivalent swing equation}) as well as the frequency-dependent behavior of the loads which is lumped into a single damping constant $D$. \textcolor{myblue}{In this paper, this damping constant $D$ is assumed to remain unchanged while aggregating different \M s.}
Different feedback loops in Fig. \ref{Fig: Aggregated SFR model} model the contribution of each \M ~to the droop control of the \MM ~network \cite{Kundur-1994-Stability-Book}. For each $m \in \MMM$, $R_m$ is the droop constant of the VSC; $T_m$ and $T'_m$ are the corresponding time constants.
\begin{figure} [h] 
\tikzstyle{block} = [draw, fill=white!20, rectangle, line width=0.5mm, 
    minimum height=2.1em, minimum width=3.9em]
    
\tikzstyle{block1} = [draw, fill=white!20, rectangle, line width=0.5mm,
        minimum height=2.1em, minimum width=2.1em]
  
\tikzstyle{sum} = [draw, fill=blue!20, circle, node distance=1.2cm,line width=0.4mm]
\tikzstyle{input} = [coordinate]
\tikzstyle{output} = [coordinate]
\tikzstyle{pinstyle} = [pin edge={to-,thin,black}]

\begin{tikzpicture}[auto, node distance=1.1cm,>=latex']
    \node [input, name=input] {};
    \node [sum, right of=input] (sum) {};
    \node [block, right of=sum, node distance=3.1cm] (controller) {$\frac{1}{{2H_{a}s + D}}$};
    \node [output, right of=controller, node distance=4.5cm] (output) {};
    \coordinate [below of=controller] (tmp);
    
    \node [block, below of=controller] (measurements) {$\frac{{ {1 + {T_1}s}}}{{1 +      T^{\prime}_1 s}}$};
    \node [block1, right of=measurements, node distance=1.8cm] (measurements2)             {$\frac{1}{R_1}$};


      \node [block, below of=measurements] (measurements3) {$\frac{{ {1 + {T_m}s}}}{{1     + T^{\prime}_m s}}$};
      
      \node [block1, right of=measurements3, node distance=1.8cm] (measurements4)           {$\frac{1}{R_m}$};
      

    \draw [draw,->, line width=0.4mm] (input) -- node {$\Delta P_{a}(s)$} (sum);
    \draw [->,line width=0.4mm] (sum) -- node {} (controller);
    \draw [->,line width=0.4mm] (controller) -- node [name=y] {$\qquad \qquad \qquad \Delta \omega (s)$}(output);
    
    \draw [->,line width=0.4mm] (y) |- (measurements2);
    \draw [draw,->,line width=0.4mm] (measurements2) -- node {} (measurements);
  
      \draw [dashed] [->,line width=0.3mm] (y) |- (measurements4);
      \draw [dashed] [draw,->,line width=0.3mm] (measurements4) -- node {} (measurements3);
  
    \draw [->,line width=0.4mm] (measurements) -| node[pos=0.89] {$-$} 
           node [near end] {} (sum);
           
           \draw [dashed][->,line width=0.3mm] (measurements3) -| node[pos=0.89] {$ \: - $} 
           node [near end] {} (sum.south west);
           
\end{tikzpicture}
\centering
\caption{Block diagram of the aggregated SFR model.}
\label{Fig: Aggregated SFR model}
\end{figure}
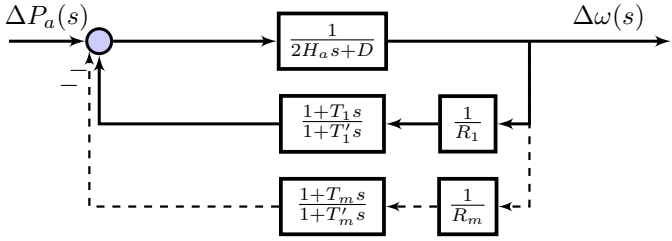

In general, the order of this SFR model is $|\MMM| + 1$. In particular, however, we are interested in the steady state and nadir outputs of the SFR model. It can be shown that the steady state output of this general-order model is not a function of the time constant  $T'_m$. Moreover, the results of a sensitivity analysis on the parameters of a similar SFR model confirms that the nadir frequency is less sensitive to $T'_m$ \cite{Ahmadi-2014-UnitCommitment}. Accordingly, we assume identical values of $T'_m$ for all \M s in the \MM ~network, i.e., $T':=T'_m, \forall m \in \MMM$. Consequently, the transfer function of the aggregated SFR model can be written as (\ref{eq: SFR Transfer Function}), with the additional parameters defined in (\ref{eq: transfer function para definitions}):
\begin{equation} \label{eq: SFR Transfer Function}
\mathcal{H}(s) = \frac{1 + T's} {2 H_a T' \left( s^2 + 2 \xi \omega_n s + \omega _n^2 \right)},
\end{equation}
\begin{subequations} \label{eq: transfer function para definitions}
	\begin{align}
	&{\omega _n} := \sqrt {\frac{D + 1/R_a}{2 H_a T'}},\; \xi  := \frac{ 2H_a + T'D + K_a } {2 \sqrt {2 H_a T'\left( D + 1/R_a \right)}},\\
	& \frac{1}{R_a}:=\sum \limits_{m \in \MMM} \frac{1}{R_m}, K_a := \sum \limits_{m \in \MMM} \frac{T_m}{R_m}, \label{eq: aggregated Ra and Ta}
	\end{align}
\end{subequations}
where $\Delta P_a(s)$ is the disturbance power in the \MM.

\vspace{-7pt}
\subsection{Steady State and Nadir Frequencies at COI}
In general, the dynamic behavior of the aggregated SFR model can be described by two parameters $\xi$ and ${\omega _n}$. If $\xi  = 0$, we will have an oscillatory system where the transient response will not die out. If $\xi \in (0,1)$, the transient frequency response is oscillatory (under-damped). When $\xi  = 1$, we are in the critically-damped condition, and finally, if $\xi  \in (1,+\infty)$, the frequency response will be over-damped. We shall now analyze the frequency response of the system to the unit-step input, i.e., $\Delta P_a(s)=1/s$ for three cases: the under-damped, critically-damped, and over-damped cases.
\begin{prop} \label{Prop: Nadir and steady state freq} {In the under-damped case, the steady state and nadir COI frequencies of a \MM ~network after a unit-step disturbance can be obtained by (\ref{eq: steady state frequency}) and (\ref{eq: Nadir Frequency}), respectively, i.e.,
\begin{equation} \label{eq: steady state frequency}
\Delta \omega(t^{ss}) = \frac{1}{D+1/R_a},
\end{equation}
\begin{equation} \label{eq: Nadir Frequency}
\begin{aligned}
\Delta \omega \left( t^{N} \right) = \frac{1} {D + 1/R_a} \left( {1 + \sqrt {\frac{T'-R_a K_a} {2H_a R_a}} {e^{ - \xi {\omega _n}{t^{N}}}}} \right),
\end{aligned}
\end{equation}
where $t^N$ in (\ref{eq: Nadir Frequency}) can be calculated as follows:
\begin{equation} \label{eq: Nadir time}
\begin{aligned}
t^{N} &= 
\begin{cases}
\frac{1} {\omega _r} \left( {\pi  - {\tan}^{ - 1} \left( {\frac{{{\omega_r}T'}}{{1 - \xi {\omega _n}T'}}} \right)} \right), & \text{if } \xi {\omega_n}T' < 1, \\
\frac{\pi}{2 \omega _r}, & \text{if } \xi {\omega_n}T' = 1, \\
\frac{1} {\omega_r} \left( { {{\tan }^{ - 1}} \left( {\frac{{{\omega _r}T'}}{{\xi {\omega _n}T'-1}}} \right)} \right), & \text{if } \xi {\omega_n}T' > 1.
\end{cases} 
\end{aligned}
\end{equation}
Additionally, in the critically-damped and over-damped cases, the nadir COI frequency is equal to the steady state COI frequency, and both can be calculated according to (\ref{eq: steady state frequency}). }
\end{prop}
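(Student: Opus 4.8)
The plan is to treat the aggregated SFR block diagram as a single-input single-output linear system and to work entirely in the Laplace domain, using $\Delta\omega(s)=\mathcal{H}(s)\,\Delta P_a(s)$ with the step input $\Delta P_a(s)=1/s$ and $\mathcal{H}(s)$ as in \eqref{eq: SFR Transfer Function}. The steady-state value \eqref{eq: steady state frequency} follows immediately from the Final Value Theorem: $\Delta\omega(t^{ss})=\lim_{s\to0}s\,\Delta\omega(s)=\mathcal{H}(0)=\frac{1}{2H_aT'\omega_n^2}$, and substituting $\omega_n^2=(D+1/R_a)/(2H_aT')$ from \eqref{eq: transfer function para definitions} collapses this to $1/(D+1/R_a)$. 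This part is routine; the substance of the proposition lies in the nadir.

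For the under-damped nadir I would first invert $\Delta\omega(s)$ explicitly. Writing the quadratic as $(s+\xi\omega_n)^2+\omega_r^2$ with damped frequency $\omega_r:=\omega_n\sqrt{1-\xi^2}$, a partial-fraction decomposition of $\frac{1+T's}{2H_aT'\,s\left[(s+\xi\omega_n)^2+\omega_r^2\right]}$ into a $1/s$ term plus a shifted second-order term, followed by the standard inverse transforms of $\frac{s+\xi\omega_n}{(s+\xi\omega_n)^2+\omega_r^2}$ and $\frac{\omega_r}{(s+\xi\omega_n)^2+\omega_r^2}$, yields a response of the form $\Delta\omega(t)=\frac{1}{D+1/R_a}+e^{-\xi\omega_n t}\bigl[P\cos(\omega_r t)+Q\sin(\omega_r t)\bigr]$ for explicit constants $P,Q$ depending on $T',\xi,\omega_n$. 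Differentiating and imposing $d\Delta\omega/dt=0$ then reduces, after cancelling $e^{-\xi\omega_n t}$, to the single condition $\tan(\omega_r t^N)=\frac{\omega_r T'}{\xi\omega_n T'-1}$.

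The three cases in \eqref{eq: Nadir time} are precisely the branch analysis of this one tangent equation at its \emph{first} positive stationary point. When $\xi\omega_n T'<1$ the right-hand side is negative, so the first admissible root satisfies $\omega_r t^N\in(\pi/2,\pi)$ and the identity $\tan(\pi-\theta)=-\tan\theta$ produces the $\pi-\tan^{-1}(\cdot)$ form; when $\xi\omega_n T'=1$ the tangent blows up and $\omega_r t^N=\pi/2$; when $\xi\omega_n T'>1$ the right-hand side is positive and the root satisfies $\omega_r t^N\in(0,\pi/2)$. To recover the amplitude in \eqref{eq: Nadir Frequency}, I would evaluate $P\cos(\omega_r t^N)+Q\sin(\omega_r t^N)$ by reading $\cos(\omega_r t^N)$ and $\sin(\omega_r t^N)$ off the tangent value (each normalized by $\rho:=\sqrt{(\omega_r T')^2+(\xi\omega_n T'-1)^2}$) and then substituting the definitions \eqref{eq: transfer function para definitions}--\eqref{eq: aggregated Ra and Ta} of $\omega_n$, $\xi$, $R_a$, and $K_a$.

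I expect this last simplification to be the main obstacle: the transient amplitude first appears as a tangled combination of $T'$, $\xi$, $\omega_n$, and $\rho$, and only after the parameter substitutions and the key cancellation $\rho^2=\omega_n^2\bigl(T'^2-2\xi T'/\omega_n+1/\omega_n^2\bigr)$ does it collapse to the clean form $\sqrt{(T'-R_aK_a)/(2H_aR_a)}$. Finally, for the critically-damped ($\xi=1$) and over-damped ($\xi>1$) cases the poles become real, so the same inversion gives a constant plus decaying real (hyperbolic) exponentials; here I would argue that the stationary-point equation admits no admissible positive root, i.e. the response is monotone, so its extreme value is attained in the limit and coincides with the steady state \eqref{eq: steady state frequency}.
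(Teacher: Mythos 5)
Your proposal follows essentially the same route as the paper's proof: invert $\mathcal{H}(s)/s$ to obtain the explicit under-damped step response, take $t\to\infty$ (equivalently the Final Value Theorem) for \eqref{eq: steady state frequency}, locate the first positive stationary point via the tangent equation $\tan(\omega_r t^N)=\omega_r T'/(\xi\omega_n T'-1)$ whose branch analysis yields \eqref{eq: Nadir time}, and substitute back to get \eqref{eq: Nadir Frequency} --- indeed you spell out the tangent equation, the three branches, and the amplitude cancellation that the paper merely asserts. The only caveat, shared with the paper, is that monotonicity of the critically/over-damped response is not automatic for a second-order system with a zero (it requires the zero at $-1/T'$ not to lie to the right of the slower pole), so your plan of explicitly verifying that the stationary-point equation has no admissible positive root is the correct way to close that final case.
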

\begin{proof}
In the under-damped case, the poles of the system are $s_{1,2} = - \xi \omega_n \pm j\omega_r$, where $ \omega _r = \omega _n \sqrt {1 - \xi ^2}$ is the damped natural frequency and $j=\sqrt{-1}$ is the imaginary unit. In this case, the unit-step response is
\begin{equation} \label{eq: Step Response}
\begin{aligned}
\Delta \omega(t) = \frac{1} {2 H_a T'} \Big( \frac{1}{{\omega _n^2}} & + \frac{e^{ - \xi {\omega _n}t}}{\omega_r} \Big( T'\sin ( {{\omega_r} t} ) \\
& - \frac{1}{\omega_n} \sin ( \omega_r t + \phi  ) \Big) \Big).
\end{aligned}
\end{equation}
where $ \phi := \tan^{-1} ( \frac{\sqrt {1 - \xi ^2} }{\xi } )$. By definition, the steady state frequency is equal to $\Delta \omega(t^{ss}) := \mathop {\lim }\limits_{t \to +\infty } \Delta \omega \left( t \right)$, which leads to (\ref{eq: steady state frequency}).
The time when the frequency nadir happens (when the lowest frequency is reached before the frequency starts to recover) can be calculated by solving the optimization problem $t^N :=\min \{ t:\frac{d\Delta \omega \left( t \right)} {dt} = 0, t \in \R _{++} \}$. The closed-form solution to this problem is equal to (\ref{eq: Nadir time}). Additionally. substitution of $t^N$ in (\ref{eq: Step Response}) yields (\ref{eq: Nadir Frequency}).
Observe that when the two poles of the transfer function (\ref{eq: SFR Transfer Function}) are nearly equal, i.e., $s_{1,2} = -  \omega_n$, the system is approximated by a critically-damped one. Moreover, in the over-damped case, the two poles  of the transfer function are negative real and unequal, i.e., $s_{1,2} = - \xi \omega_n \pm \omega _n \sqrt {\xi ^2 -1}$. In the last two cases, no overshoot or undershoot is observed in the transient response of the system, and consequently, the nadir frequency is equal to the steady state frequency which is identical to (\ref{eq: steady state frequency}).
\end{proof}
\vspace{-5pt}
\textcolor{myblue}{
The interested reader is referred to Proposition 3 in
\cite{Paganini-2017-InertiaProof} for similar results under different settings}. 
\textcolor{myblue}{Now we are ready to adopt the steady-state and nadir frequencies at COI in order to build our optimization model for the resilient operation of a \MM ~network.}
\vspace{-7pt}
\section{Resilient Operation Problem Formulation} \label{sec: Self healing problem formulation}
Consider a linking grid $\NNN=(\BBB,\LLL)$, where $\BBB$ and $\LLL$ denote the set of linking buses and linking lines, respectively. We assume that the distribution network under study is comprised of a set of \M s, i.e., $m \in \MMM$, where each \M ~is modeled as a disjoint network $\NN_m=(\BB_m,\LL_m)$. Without loss of generality, we assume only one PCC for each \M, and the corresponding boundary bus is denoted by $\BBBB$. In this section, we aim to introduce an optimization model which is able to determine the optimal resilience improvement strategy, including optimal load shedding and network topology control with AC power flow, in the wake of a scheduled/unscheduled islanding in a \MM ~network. Our model is formulated as follows.
\vspace{-7pt}
\subsection{Objective Function}
The objective function (\ref{eq: NLP Obj}) is to minimize the total load shedding cost in all \M s:
\begin{equation} \label{eq: NLP Obj}
\min \sum \limits_{m \in \MMM} \sum \limits_{i \in \BB _m } {\lambda _{mi}^{VOLL} \left( {1 - x_{mi}} \right) \bar p_{mi}^D}, 
\end{equation}
where $\lambda _{mi}^{VOLL}$ is the value of lost load (VOLL) in \M ~$m$ and bus $i$; $\bar p_{mi}^D$ is the pre-islanding active power consumption obtained from state estimation (SE); and $x_{mi}$ is a binary variable indicating the status of such a load after islanding happens.
\vspace{-5pt}
\subsection{Real-Time AC Power Flow Limitations in \M s}
The set of constraints (\ref{eq: active power balance})-(\ref{eq: reactive load model}) \textcolor{myblue}{which are defined for each $m \in \MMM$} guarantee the AC power flow security of each \M ~after the islanding event. Let $G_{mij}$ and $B_{mij}$ be the conductance and susceptance of line $(i,j)$ in \M ~$m$; and $f_{mij}^P$ and $f_{mij}^Q$ be the active and reactive flow of that line. Additionally, let $p_{mg}^G$ and $q_{mg}^G$ be the active and reactive power output of DER $g$ in \M ~$m$; and similarly, $p_{mi}^D$ and $q_{mi}^D$ be the active and reactive power consumption of the load at bus $i$ in \M ~$m$. We define $V_{mi}$ and $\theta_{mi}$ as the voltage magnitude and angle of bus $i$ in \M ~$m$. Finally, $\Delta {P_m}$ and $\Delta {Q_m}$ denote the active and reactive power exchange between the \M ~$m$ and the linking grid (through the VSC). Based on this notation, constraints (\ref{eq: active power balance}) and (\ref{eq: reactive power balance}) model the active and reactive power balance within each \M. Similarly, constraints (\ref{eq: boundary active power balance}) and (\ref{eq: boundary reactive power balance}) are related to the active and reactive power balance at the boundary buses. Note that $\OO$ in these equations is the mapping of the set of DERs into the set of buses. The set of equations (\ref{eq: internal active power flow})-(\ref{eq: voltage limit}) constitute the AC power flow equations, line flow limits, and voltage bounds in each \M. Finally, active and reactive power demands at different buses are modeled by the voltage-dependent ZIP model (\ref{eq: active load model}) and (\ref{eq: reactive load model}), where $\kappa ^{PI}$, $\kappa ^{PC}$, and $\kappa ^{PP}$ denote the coefficients of constant impedance, constant current, and constant power terms in active power loads, respectively. These coefficients are defined in the same way for reactive power loads.
\begin{equation} \label{eq: active power balance}
\begin{aligned}
\sum \limits_{g:(g,i) \in {\OO}_m} p_{mg}^G - x_{mi} p_{mi}^D = \sum \limits_{\left( {i,j} \right) \in \LL_m }  f_{mij}^P, \forall i \in \BB_m
\end{aligned}
\end{equation}
\begin{equation} \label{eq: reactive power balance}
\begin{aligned}
& \sum\limits_{g:(g,i) \in  {\OO}_m} q_{mg}^G - x_{mi} q_{mi}^D = \sum \limits_{\left( {i,j} \right) \in \LL_m } f_{mij}^Q, \forall i \in \BB_m
\end{aligned}
\end{equation}
\begin{equation} \label{eq: boundary active power balance}
\begin{aligned}
\sum \limits_{g:(g,i) \in {\OO}_m} {p_{mg}^G} & - {x_{mi}}p_{mi}^D + \Delta {P_m} = \sum \limits_{\left( {i,j} \right) \in \LL_m } f_{mij}^P, 
\forall i \in \BBBB_m
\end{aligned}
\end{equation}
\begin{equation} \label{eq: boundary reactive power balance}
\begin{aligned}
\sum \limits_{g:(g,i) \in {\OO}_m} {q_{mg}^G} & - {x_{mi}}q_{mi}^D + \Delta {Q_m}= {\rm{ }}\sum\limits_{\left( {i,j} \right) \in \LL_m } f_{mij}^Q, 
 \forall i \in \BBBB_m
\end{aligned}
\end{equation}
\vspace{-3pt}
\begin{equation} \label{eq: internal active power flow}
\begin{aligned}
& f_{mij}^P =  G_{mij} \Big( V_{mi}^2 - V_{mi} V_{mj} \cos \left( \theta_{mi} - \theta_{mj} \right) \Big) \\
& - B_{mij} V_{mi} V_{mj} \sin \left( \theta_{mi} - \theta_{mj} \right), \: \forall \left( {i,j} \right) \in  \LL_m
\end{aligned}
\end{equation}
\begin{equation} \label{eq: internal reactive power flow}
\begin{aligned}
& f_{mij}^Q = - B_{mij} \Big( V_{mi}^2 - V_{mi} V_{mj} \cos \left( \theta_{mi} - \theta_{mj} \right) \Big) \\
& - G_{mij} V_{mi} V_{mj} \sin \left( \theta_{mi} - \theta_{mj} \right),\forall \left( {i,j} \right) \in  \LL_m
\end{aligned}
\end{equation}
\begin{equation} \label{eq: internal active flow limit loss}
\begin{aligned}
f_{mij}^P + f_{mji}^P \le f_{mij}^{\max},\: \forall (i,j) \in \LL_m
\end{aligned}
\end{equation}
\begin{equation} \label{eq: voltage limit}
\begin{aligned}
& V_{mi}^{\min} \le V_{mi} \le V_{mi}^{\max}, \: \forall i \in (\BB_m \cup \BBBB_m)
\end{aligned}
\end{equation}
\vspace{-5pt}
\begin{equation} \label{eq: active load model}
\begin{aligned}
p_{mi}^D = & \overline p_{mi}^D \Big( \kappa_{mi}^{PI} V_{mi}^2 + \kappa_{mi}^{PC} V_{mi} + \kappa_{mi}^{PP} \Big), 
 \forall i \in (\BB_m \cup \BBBB_m)
\end{aligned}
\end{equation}
\vspace{-5pt}
\begin{equation} \label{eq: reactive load model}
\begin{aligned}
q_{mi}^D = & \overline q_{mi}^D \Big( \kappa_{mi}^{QI} V_{mi}^2 + \kappa_{mi}^{QC} V_{mi} + \kappa_{mi}^{QP} \Big), \forall i \in (\BB_m \cup \BBBB_m).
\end{aligned}
\end{equation}
\vspace{-12pt}
\subsection{Real-Time AC Power Flow Limitations in the Linking Grid}
Similarly, this group of constraints are associated with the AC power flow limitations of the linking grid. Here, line switching is available, therefore, $Z_{mk}$ is a binary variable indicating the status of the linking line $(m,k)$. 
\textcolor{myblue}{It is worth mentioning that connection/disconnection of \M s to the linking grid is performed through the switchgear located at PCCs and line switching in the linking grid is commonly available through the distribution automation switches and isolators \cite{Xie-2015-onlineSecurity}.}
Let $M_{mk}$ be a sufficiently large positive number. In these constraints, in terms of notation, we use tilde over the variables and parameters to make the difference between the linking grid and the rest of the distribution grid. In particular, equations (\ref{eq: external active power balance}) and (\ref{eq: external reactive power balance}) model the active and reactive power balance at external buses. The group of constraints (\ref{eq: external active power flow up})-(\ref{eq: external voltage limit}) are associated with the AC power flow equations (where the lines are allowed to be switched on and off), line flow limits, and voltage bounds in the linking grid. 
\begin{equation} \label{eq: external active power balance}
\begin{aligned}
- \Delta {P_m} = \sum \limits_{(m,k) \in \LLL } \tilde{f}_{mk}^P, \: \forall m \in \MMM
\end{aligned}
\end{equation}
\begin{equation} \label{eq: external reactive power balance}
\begin{aligned}
- \Delta {Q_m} = \sum \limits_{(m,k) \in \LLL } \tilde{f}_{mk}^{Q}, \: \forall m \in \MMM
\end{aligned}
\end{equation}
\begin{equation} \label{eq: external active power flow up}
\begin{aligned}
- \tilde{f}_{mk}^{\textcolor{myblue}{P}} &+ \tilde{G}_{mk} \Big( \tilde{V}_m^2 - \tilde{V}_m \tilde{V}_k \cos( \theta_m - \theta_k ) \Big)\\
&- \tilde{B}_{mk} \tilde{V}_m \tilde{V}_k \sin ( \theta_m - \theta_k ) \\
&+ \left( 1 - Z_{mk} \right) M_{mk} \ge 0, \: \forall (m,k) \in \LLL
\end{aligned}
\end{equation}
\begin{equation} \label{eq: external active power flow down}
\begin{aligned}
- \tilde{f}_{mk}^P & +  \tilde{G}_{mk} \Big( \tilde{V}_m^2 - \tilde{V}_m \tilde{V}_k \cos( \theta_m - \theta_k ) \Big)\\
&- \tilde{B}_{mk} \tilde{V}_m \tilde{V}_k \sin ( \theta_m - \theta_k ) \\
&- \left( 1 - Z_{mk} \right) M_{mk} \le 0, \: \forall \left( {m,k} \right) \in \LLL
\end{aligned}
\end{equation}
\begin{equation} \label{eq: external reactive power flow up}
\begin{aligned}
- \tilde{f}_{mk}^Q & -  \tilde{B}_{mk} \Big( V_m^2 - \tilde{V}_m \tilde{V}_k \cos( \theta_m - \theta_k ) \Big) \\
& - \tilde{G}_{mk} \tilde{V}_m \tilde{V}_k \sin ( \theta_m - \theta_k ) \\
& + \left( 1 - Z_{mk} \right) M'_{mk} \ge 0, \: \forall \left( {m,k} \right) \in \LLL
\end{aligned}
\end{equation}
\begin{equation} \label{eq: external reactive power flow down}
\begin{aligned}
 - \tilde{f}_{mk}^Q & -  \tilde{B}_{mk} \Big( V_m^2 - \tilde{V}_m \tilde{V}_k \cos( \theta_m - \theta_k ) \Big) \\
 & - \tilde{G}_{mk} \tilde{V}_m \tilde{V}_k \sin ( \theta_m - \theta_k ) \\
 & - \left( {1 - {Z_{mk} }} \right) M'_{mk} \le 0,  \: \forall \left( {m,k} \right) \in \LLL
\end{aligned}
\end{equation}
\begin{equation} \label{eq: external active flow limit}
- \tilde{f}_{mk}^{P,\max} Z_{mk} \le \tilde{f}_{mk}^P \le \tilde{f}_{mk}^{P,\max} Z_{mk}, \: \forall \left( {m,k} \right) \in \LLL
\end{equation}
\begin{equation} \label{eq: external reactive flow limit}
- \tilde{f}_{mk}^{Q,\max} Z_{mk} \le \tilde{f}_{mk}^Q \le \tilde{f}_{mk}^{Q,\max} Z_{mk}, \: \forall \left( {m,k} \right) \in \LLL
\end{equation}
\begin{equation} \label{eq: external active flow limit loss}
\begin{aligned}
\tilde{f}_{mk}^P + \tilde{f}_{\textcolor{myblue}{{km}}}^P \le \tilde{f}_{mk}^{P,Loss,\max}, \: \forall \left( {m,k} \right) \in \LLL
\end{aligned}
\end{equation}
\begin{equation} \label{eq: external voltage limit}
\tilde{V}_{m}^{\min} \le \tilde{V}_{m} \le \tilde{V}_{m}^{\max}, \: \forall i \in \BBB, \: m \in \MMM.
\end{equation}
\subsection{DER Output Limitations and Binary Variable Declaration}
Finally, (\ref{eq: generator ramp limits})-(\ref{eq: binary variables}) pertain to the limitations on the output of the generators and the declaration of binary variables. In these constraints, $R^D$, $R^U$, and ${p^{G,0}}$ are the ramp-down, ramp-up, and pre-islanding active power generation of DERs, respectively.
\begin{equation} \label{eq: generator ramp limits}
\begin{aligned}
- & R_{mg}^D \le p_{mg}^G - p_{mg}^{G,0} \le R_{mg}^U, \: \forall g  \in \GG_m, m \in \MMM
\end{aligned}
\end{equation}
\begin{equation} \label{eq: generator active limits}
\begin{aligned}
& p_{mg}^{G,\min } \le p_{mg}^G \le p_{mg}^{G,\max }, \: \forall g  \in \GG_m, m \in \MMM
\end{aligned}
\end{equation}
\begin{equation} \label{eq: generator reactive limits}
\begin{aligned}
& q_{mg}^{G,\min } \le q_{mg}^G \le q_{mg}^{G,\max }, \: \forall g  \in \GG_m, m \in \MMM
\end{aligned}
\end{equation}
\begin{equation} \label{eq: binary variables}
\begin{aligned}
&x \in \left\{0,1\right\}^{ |\MMM| \times |\BB \cup \BBBB | }, \: Z \in \left\{0,1\right\}^{ |\LLL| }.
\end{aligned}
\end{equation}
\subsection{Frequency Constraints and Reformulation} \label{subsec: Frequency Constraints}
In Section \ref{sec: Frequency response of multi microgrid}, we developed the steady-state and nadir frequencies of a \MM ~network subsequent to an imbalance between real power generation and consumption. Indeed, these are two important metrics which are employed to ensure the frequency security of the network. Therefore, we aim to keep these two metrics within the permissible range while the \MM ~network moves from the grid-connected mode to the island mode. 
Note that subsequent to the islanding process, the distribution network might be partitioned into different components (each component might include one or more \M s), and the frequency security limitations must be met for each component separately. We propose the following constraints for ensuring the frequency security of the \MM ~network for each $\s \subseteq \NNN, \s  \ne \emptyset$:
\begin{subequations} \label{eq: General Cut}
\begin{align}
& \notag \Delta \omega_N^{\min} \le \alpha_{\s} \sum \limits_{m \in \BBB _{\s}  } { \Big( { - \Delta P_m^0 + \sum \limits_{i \in \BB _m } {\left( {1 - {x_{mi}}} \right)p_{mi}^D} } \Big) } \\ 
& + \mathcal{I}_M \left( \textnormal{$\s$ is connected} \right) + \mathcal{I}_M \left( \textnormal{$\s$ is isolated} \right), \label{eq: General Cut: nadir min} \\   
%
& \notag  \textcolor{myblue}{    \Delta \omega_N^{\max} \ge \alpha_{\s} \sum \limits_{m \in \BBB _{\s}  } { \Big( { - \Delta P_m^0 + \sum \limits_{i \in \BB _m } {\left( {1 - {x_{mi}}} \right)p_{mi}^D} } \Big) }   }\\
& \textcolor{myblue}{  - \mathcal{I}_M \left( \textnormal{$\s$ is connected} \right) - \mathcal{I}_M \left( \textnormal{$\s$ is isolated} \right), }  \label{eq: General Cut: nadir max} \\
& \notag \Delta \omega_{ss}^{\min} \le \beta_{\s} \sum \limits_{m \in \BBB _{\s}  } { \Big( { - \Delta P_m^0 + \sum \limits_{i \in \BB _m } {\left( {1 - {x_{mi}}} \right)p_{mi}^D} } \Big) } \\
& + \mathcal{I}_M \left( \textnormal{$\s$ is connected} \right) + \mathcal{I}_M \left( \textnormal{$\s$ is isolated} \right), \label{eq: General Cut: steady state min} \\
& \notag  \textcolor{myblue}{  \Delta \omega_{ss}^{\max} \ge \beta_{\s} \sum \limits_{m \in \BBB _{\s}  } { \Big( { - \Delta P_m^0 + \sum \limits_{i \in \BB _m } {\left( {1 - {x_{mi}}} \right)p_{mi}^D} } \Big) }  }  \\
& \textcolor{myblue}{  - \mathcal{I}_M \left( \textnormal{$\s$ is connected} \right) - \mathcal{I}_M \left( \textnormal{$\s$ is isolated} \right), }  \label{eq: General Cut: steady state max}
\end{align}
\end{subequations}
where $\mathcal{I}_M$ is the indicator function whose value is equal to $0$ if the condition is satisfied, and equal to a sufficiently large number, otherwise. Moreover, $\alpha_{\s}$ and $\beta_{\s}$ are the nadir and steady state values of the unit-step frequency response, which are calculated in (\ref{eq: Nadir Frequency}) and (\ref{eq: steady state frequency}), respectively. The use of subscript $\s$ in these two parameters emphasizes that they should be calculated for each $\s \subseteq \NNN$, that is, the associated parameters $H_a$, $R_a$, and $K_a$ are obtained by (\ref{eq: COI Inertia}) and (\ref{eq: aggregated Ra and Ta}), where $m \in \MMM$ is replaced by $m \in \BBB _{\s}$. 
\textcolor{myblue}{ Note that $\Delta \omega_N^{\min}$/$\Delta \omega_N^{\max}$ and $\Delta \omega_{ss}^{\min}$/$\Delta \omega_{ss}^{\max}$ denote the lower/upper bound on the nadir and steady state frequencies, respectively.} Moreover, $\Delta P_m^0$ denotes the pre-islanding power exchange between \M ~$m$ and the linking grid. 
In (\ref{eq: General Cut}), the first term on the right-hand side of the inequities is indeed the multiplication of the unit-step response by the post-islanding net power mismatch (i.e., pre-islanding power exchange minus the amount of post-islanding load shedding). 
Let us further investigate these frequency security constraints by defining
\begin{subequations}
\begin{align}
& \LLL(\s) := \{(m,k) \in \LLL: \: m,k \in \BBB _{\s}, \: m > k \}, \label{eq: edges of subgraph} \\
& \delta (\s) := \{(m,k) \in \LLL: \: m \in \BBB _{\s}, \: k \notin \BBB _{\s}, \: m > k \}. \label{eq: cutset of subgraph} 
\end{align}
\end{subequations}
\textcolor{myblue}{
Given a subgraph $\mathcal{S}$ of $\tilde{\mathcal{N}}$, $\LLL(\s)$ in (\ref{eq: edges of subgraph}) denotes the set of edges in the subgraph $\mathcal{S}$, i.e., the set of edges in $\tilde{\mathcal{L}}$ whose both ends are in ${\tilde{\mathcal{B}}}_\mathcal{S}$. Additionally, (\ref{eq: cutset of subgraph}) describes the cutset $\delta(\mathcal{S})$, i.e., the set of edges that have exactly one end in ${\tilde{\mathcal{B}}}_\mathcal{S}$.}
\textcolor{myblue}{Now, we will provide an equivalent reformulation for (\ref{eq: General Cut}) using a spanning tree characterization. This reformulation will help us verify the frequency constraints in each connected component of the grid. It also provides new insights into the way we interpret the frequency constraints.}
We will focus on the inequality (\ref{eq: General Cut: nadir min}); (\ref{eq: General Cut: nadir max})-(\ref{eq: General Cut: steady state max}) can be similarly analyzed.
\begin{prop} {Inequality (\ref{eq: General Cut: nadir min}) is equivalent to (\ref{eq: Spelled General Cut}), that is,
\begin{subequations} \label{eq: Spelled Cut}
\begin{align}
& \notag \Delta \omega_N^{\min} \le \alpha_{\s} \sum \limits_{m \in \BBB _{\s}  } { \Big( { - \Delta P_m^0 + \sum \limits_{i \in \BB _m } {\left( {1 - {x_{mi}}} \right)p_{mi}^D} } \Big) } \\
& \notag + \min \left\{ {0:(\ref{eq: IP Spanning Tree 1})-(\ref{eq: IP Spanning Tree 4})} \right\} + \sum \limits_{\left( {m,k} \right) \in \delta (\s)} {\left( {{Z_{mk}}} \right)} M_N, \\
& \qquad \qquad \qquad \qquad \forall \s \subseteq \NNN, \s  \ne \emptyset \label{eq: Spelled General Cut}
\end{align}
\end{subequations}
\vspace{-5pt}
where
\vspace{-2pt}
\begin{subequations} 
\begin{align}
& u_{mk} \le Z_{mk},\: \forall (m,k) \in \LLL(\s), \label{eq: IP Spanning Tree 1} \\
& \sum \limits_{(m,k) \in \LLL(\s)  } u_{mk}  = |\BBB _{\s} | - 1, \label{eq: IP Spanning Tree 2} \\
& \sum \limits_{\left( {m,k} \right) \in \delta (\s)} u_{mk}  \ge 1,\: \forall \s \subseteq \NNN, \s  \ne \emptyset, \NNN, \label{eq: IP Spanning Tree 3} \\
& {u_{mk}} \in \left\{ {0,1} \right\},\: \forall (m,k) \in \LLL(\s). \label{eq: IP Spanning Tree 4}
\end{align}
\end{subequations}
}
\end{prop}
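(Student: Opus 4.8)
The plan is to prove the set-equivalence of the two constraints by a case analysis on the connectivity and isolation status of $\s$ under a given switching vector $Z$, matching the reformulation against the indicator terms of (\ref{eq: General Cut: nadir min}) piece by piece. First I would note that the scaled net-mismatch term $\alpha_{\s}\sum_{m\in\BBB_\s}(\cdot)$ appears verbatim in both (\ref{eq: General Cut: nadir min}) and (\ref{eq: Spelled General Cut}), so the equivalence reduces to showing that $\mathcal{I}_M(\text{$\s$ is connected})+\mathcal{I}_M(\text{$\s$ is isolated})$ and $\min\{0:(\ref{eq: IP Spanning Tree 1})\text{--}(\ref{eq: IP Spanning Tree 4})\}+\sum_{(m,k)\in\delta(\s)}Z_{mk}M_N$ render the inequality binding or vacuous in exactly the same cases, for every feasible $Z$ (with $u$ treated as auxiliary variables local to each $\s$).

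Then I would treat the two indicators separately. For isolation, I would argue that $\s$ is isolated precisely when no cutset line is energized, i.e.\ $Z_{mk}=0$ for all $(m,k)\in\delta(\s)$; in that case $\sum_{(m,k)\in\delta(\s)}Z_{mk}M_N=0=\mathcal{I}_M(\text{$\s$ is isolated})$, while otherwise the sum is at least $M_N$, which for $M_N$ taken sufficiently large plays the same role as the big value of the indicator by forcing the right-hand side above $\Delta\omega_N^{\min}$ unconditionally. For connectivity, the key claim is that $\min\{0:(\ref{eq: IP Spanning Tree 1})\text{--}(\ref{eq: IP Spanning Tree 4})\}$ equals $0$ when $\s$ is connected through energized lines and $+\infty$ otherwise, mirroring $\mathcal{I}_M(\text{$\s$ is connected})$. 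This rests on the standard fact that the system (\ref{eq: IP Spanning Tree 1})--(\ref{eq: IP Spanning Tree 4}) is feasible if and only if the subgraph of $\s$ induced by the lines with $Z_{mk}=1$ contains a spanning tree of $\BBB_\s$: (\ref{eq: IP Spanning Tree 1}) restricts the selected edges $u$ to energized lines, (\ref{eq: IP Spanning Tree 2}) fixes the edge count to $|\BBB_\s|-1$, and the cutset (connectivity) inequalities (\ref{eq: IP Spanning Tree 3}) enforce at least one selected edge across every proper vertex subset, so together they give the usual integer description of the spanning trees of that subgraph.

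With both correspondences established, I would close the argument by the case split. When $\s$ is both connected and isolated, both formulations collapse to $\Delta\omega_N^{\min}\le\alpha_\s\sum_{m\in\BBB_\s}(\cdot)$, imposing the same restriction. When $\s$ is not isolated, both the isolation indicator and the $M_N$-weighted cutset sum inflate the right-hand side, so both constraints are trivially satisfied; when $\s$ is not connected, both the connectivity indicator and the inner minimization attain their large value ($M$, respectively $+\infty$), again rendering both vacuous. Hence the feasible sets in $(x,Z)$ (with auxiliary $u$) coincide, which is the asserted equivalence, and the analogous reformulations of (\ref{eq: General Cut: nadir max})--(\ref{eq: General Cut: steady state max}) follow by the same reasoning with $\beta_\s$ in place of $\alpha_\s$ and the bound-sign conventions adjusted.

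I expect the main obstacle to be the connectivity characterization, specifically verifying that (\ref{eq: IP Spanning Tree 3}) supplies the correct connectivity cuts, namely interpreting its index set as ranging over all proper nonempty vertex subsets of $\BBB_\s$ (not the outer $\s$), and then invoking the spanning-tree description to conclude that feasibility of the inner program is equivalent to connectivity of $\s$ via energized lines. By contrast, reconciling the ``sufficiently large'' constant $M$ of the indicator with the $+\infty$ produced by an infeasible inner program and with $M_N$ in the cutset term is routine bookkeeping: since the net-mismatch term is bounded over all $(x,Z)$, one simply insists that $M_N$ (and $M$) dominate $\Delta\omega_N^{\min}$ minus the minimal attainable value of that term, which guarantees the desired vacuity in every non-binding case.
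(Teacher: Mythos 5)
Your proposal is correct and follows essentially the same route as the paper's proof: the inner minimization $\min\{0:(\ref{eq: IP Spanning Tree 1})\text{--}(\ref{eq: IP Spanning Tree 4})\}$ equals $0$ precisely when the energized edges of $\s$ admit a spanning tree (i.e., $\s$ is connected) and $+\infty$ otherwise, while the cutset term $\sum_{(m,k)\in\delta(\s)}Z_{mk}M_N$ vanishes precisely when $\s$ is isolated, so the two right-hand sides are binding or vacuous in the same cases. You are merely more explicit than the paper about the case split, the sizing of $M_N$ against the bounded net-mismatch term, and the reading of the index set in (\ref{eq: IP Spanning Tree 3}) as ranging over proper nonempty vertex subsets of $\BBB_\s$, all of which the paper leaves implicit.
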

\begin{proof}
The minimization problem embedded in (\ref{eq: Spelled General Cut}) has an optimal value equal to $0$ if there exists an spanning tree in $\s$. Otherwise, the problem is infeasible and the objective value will be equal to $+ \infty $, making (\ref{eq: Spelled General Cut}) redundant. \textcolor{myblue}{Here, we use the definition of a tree as a connected graph containing $n-1$ edges ($n$ is the number of nodes in the graph).} Accordingly, (\ref{eq: IP Spanning Tree 1}) ensures that the spanning tree is a subgraph of $\s$. Additionally, (\ref{eq: IP Spanning Tree 2}) and (\ref{eq: IP Spanning Tree 3}) guarantee that the spanning tree has $|\BBB _{\s} | - 1$ edges and satisfies the connectivity requirement, respectively. Finally, the last term in (\ref{eq: Spelled General Cut}) ensures that $\s$ is a component.
\end{proof}
Note that both (\ref{eq: General Cut}) and their reformulation in the form of (\ref{eq: Spelled General Cut}) have an exponential number of constraints. We will propose a cutting-plane approach to deal with this issue in Section \ref{Sec: Solution Methodology}. 
%
\subsection{\textcolor{myblue}{Overall MINLP Formulation}}
\textcolor{myblue}{
Before passing to solution methodology of the problem, let us review the overall MINLP formulation of the multi-\M ~resilient operation problem. The decision variables of this formulation are: i) the status of loads ($x_{mi}$); ii) the status of linking lines ($Z_{mk}$); iii) active and reactive flow of lines ($f_{mij}^P$, $f_{mij}^Q$, $\tilde{f}_{mk}^P$, $\tilde{f}_{mk}^{Q}$); iv) active and reactive power of DERs and loads ($p_{mg}^G$, $q_{mg}^G$, $p_{mi}^D$, $q_{mi}^D$); v) voltage magnitudes and angles ($V_{mi}$, $\theta_{mi}$); vi) active and reactive power exchange between the \M s and the linking grid ($\Delta {P_m}$, $\Delta {Q_m}$); and vii) spanning tree variable ($u_{mk}$).}
\textcolor{myblue}{
For the sake of brevity, let $\mathcal{X}$ be the set of constraints (\ref{eq: active power balance})-(\ref{eq: binary variables}) and let $\mathcal{F}$ represent the set of constraints in (\ref{eq: General Cut}). Now, we introduce $\mathcal{MINLP}(\mathcal{X},\mathcal{F})$ as follows:}
\begin{equation} \label{eq: overall MINLP Formulation}
\begin{aligned}
\textcolor{myblue}{ \vartheta = \:} & \textcolor{myblue}{\min \sum \limits_{m \in \MMM} \sum \limits_{i \in \BB _m } {\lambda _{mi}^{VOLL} \left( {1 - x_{mi}} \right) \bar p_{mi}^D} }\\
& \notag \textrm{ \textcolor{myblue}{s.t. $\: \:$ (\ref{eq: active power balance})-(\ref{eq: General Cut}).}}
\end{aligned}
\end{equation}

\vspace{-7pt}
\section{Solution Methodology} \label{Sec: Solution Methodology}
The formulation $\mathcal{MINLP}(\mathcal{X},\mathcal{F})$ is a nonconvex nonlinear optimization problem. Moreover, the developed frequency limitations in (\ref{eq: General Cut}) as well as their equivalent reformulations in (\ref{eq: Spelled General Cut}) induce exponentially many constraints. In this section, we will address these challenges.

\vspace{-7pt}
\subsection{MISOCP Reformulation and Convexification}
Observe that all the nonlinearity and nonconvexity of $\mathcal{MINLP}(\mathcal{X},\mathcal{F})$ stem from three sources:
i) the nonlinear terms $V_{mi}^2$, $V_{mi} V_{mj} \cos \left( \theta_{mi} - \theta_{mj} \right)$, and $V_{mi} V_{mj} \sin \left( \theta_{mi} - \theta_{mj} \right)$ in constraints (\ref{eq: internal active power flow})-(\ref{eq: internal reactive power flow}) and also the similar terms in (\ref{eq: external active power flow up})-(\ref{eq: external reactive power flow down}),
ii) the quadratic term $V_{mi}^2$ in constraints (\ref{eq: active load model})-(\ref{eq: reactive load model}),  
iii) the bilinear terms $x_{mi} p_{mi}^D$ and $x_{mi} q_{mi}^D$ in constraints (\ref{eq: active power balance})-(\ref{eq: boundary reactive power balance}) and (\ref{eq: General Cut}).
In this section, we will convexify/linearize the aforementioned terms, leading to an MISOCP relaxation of the \MM ~resilient operation problem.
\subsubsection{MISOCP Relaxation of AC Power Flow Equations} \label{sec: MISOCP Relaxation of AC Power Flow Equations}

Based on the recent development in SOCP relaxation of standard AC-OPF \cite{kocuk-2016-StrongSOCP}, we define the following auxiliary variables for each $\left( {i,j} \right) \in  \LL_m$ and $m \in \MMM$:
\begin{subequations} \label{eq: ViVjcos and ViVjsin}
\begin{align}
& C_{mij} := V_{mi} V_{mj} \cos \left( \theta_{mi} - \theta_{mj} \right), \\
& S_{mij} := V_{mi} V_{mj} \sin \left( \theta_{mi} - \theta_{mj} \right).
\end{align}
\end{subequations}
Observe that (\ref{eq: ViVjcos and ViVjsin}) implies (\ref{eq: S and C declaration}), that is 
\begin{subequations} \label{eq: S and C declaration}
\begin{align}
& C_{mij}^2 + S_{mij}^2 = C_{mii} C_{mjj}, \label{eq: S2 + C2 = cc} \\
& S_{mij} = - S_{mji}, \: C_{mij} = C_{mji}.\label{eq: S = -S  C = C} 
\end{align}
\end{subequations}
Similarly, we define $\tilde{C}_{mk}  := \tilde{V}_m \tilde{V}_k \cos( \theta_m - \theta_k )$ and $\tilde{S}_{mk}  = \tilde{V}_m \tilde{V}_k \sin ( \theta_m - \theta_k )$ for each $(m,k) \in \LLL$, and the following constraints will be inferred:
\begin{subequations} \label{eq: tide S and C declaration}
\begin{align}
& \tilde{C}_{mk}^2 + \tilde{S}_{mk}^2 = \tilde{C}_{mm} \tilde{C}_{kk},  \label{eq: tilde S2 + C2 = cc}  \\
& \tilde{S}_{mk} = - \tilde{S}_{km}, \: \tilde{C}_{mk} = \tilde{C}_{km}.\label{eq: tilde S = -S  C = C}
\end{align}
\end{subequations}
\textcolor{myblue}{Note that the convex relaxation of (\ref{eq: S2 + C2 = cc}) and (\ref{eq: tilde S2 + C2 = cc}) are: 
}
\begin{subequations} \label{eq: SOCP relaxation counterparts of S2 + C2 = cc}
\begin{align}
& C_{mij}^2 + S_{mij}^2 \le C_{mii} C_{mjj}, \label{eq: Relaxed S2 + C2 = cc} \\
& \tilde{C}_{mk}^2 + \tilde{S}_{mk}^2 \le \tilde{C}_{mm} \tilde{C}_{kk}. \label{eq: Relaxed tilde S2 + C2 = cc} 
\end{align}
\end{subequations}
%
With a change of variables for each $m \in \MMM$ and $\left( {i,j} \right) \in  \LL_m$, constraints (\ref{eq: internal active power flow}) and (\ref{eq: internal reactive power flow}) can be written as
\begin{subequations} \label{eq: relaxation counterparts of internal power flow}
\begin{align}
& f_{mij}^P =  G_{mij} \Big( C_{mii} - C_{mij} \Big) - B_{mij} S_{mij}, \label{eq: SOCP active power flow} \\
& f_{mij}^Q = - B_{mij} \Big( C_{mii} - C_{mij} \Big) - G_{mij} S_{mij},
\end{align}
\end{subequations}
and the voltage bound (\ref{eq: voltage limit}) for each $m \in \MMM$ and $i \in (\BB_m \cup \BBBB_m)$ is transformed into 
\begin{equation} \label{eq: SOCP internal voltage bounds}
(V_{mi}^{\min})^2 \le C_{mii} \le (V_{mi}^{\max})^2 . 
\end{equation}
Likewise, a change of variables for each $(m,k) \in \LLL$ leads to the constraints (\ref{eq: relaxation counterparts of external power flow}) as the counterparts of (\ref{eq: external active power flow up})-(\ref{eq: external reactive power flow down}):
\begin{subequations} \label{eq: relaxation counterparts of external power flow}
\begin{align}
& - \tilde{f}_{mk}^{\textcolor{myblue}{P}} + \tilde{G}_{mk} \Big( \tilde{C}_{mm} - \tilde{C}_{mk} \Big) - \tilde{B}_{mk} \tilde{S}_{mk} \label{eq: SOCP external active power flow} \\
& \notag \quad \qquad + \left( 1 - Z_{mk} \right) M_{mk} \ge 0, \\
& - \tilde{f}_{mk}^P +  \tilde{G}_{mk} \Big( \tilde{C}_{mm} - \tilde{C}_{mk} \Big) - \tilde{B}_{mk} \tilde{S}_{mk} \\
& \notag \quad \qquad  - \left( 1 - Z_{mk} \right) M_{mk} \le 0, \\
& - \tilde{f}_{mk}^Q -  \tilde{B}_{mk} \Big( \tilde{C}_{mm} - \tilde{C}_{mk} \Big) - \tilde{G}_{mk} \tilde{S}_{mk} \\
& \notag \quad \qquad + \left( 1 - Z_{mk} \right) M'_{mk} \ge 0, \\
& - \tilde{f}_{mk}^Q -  \tilde{B}_{mk} \Big( \tilde{C}_{mm} - \tilde{C}_{mk} \Big) - \tilde{G}_{mk} \tilde{S}_{mk} \\
& \notag \quad \qquad - \left( {1 - {Z_{m,k} }} \right) M'_{mk} \le 0 , 
\end{align}
\end{subequations}
and similarly the voltage bound (\ref{eq: external voltage limit}) for each $ m \in \MMM$ and $i \in \BBB$ can be written as: 
\begin{equation} \label{eq: SOCP external voltage bounds}
(\tilde{V}_{m}^{\min})^2 \le \tilde{C}_{mm} \le (\tilde{V}_{m}^{\max})^2 . 
\end{equation}
\subsubsection{MISOCP Relaxation of ZIP Load Models} \label{subsubsec: MISOCP Relaxation of ZIP Load Model}
\textcolor{myblue}{Using the SOCP auxiliary variables defined in Section \ref{sec: MISOCP Relaxation of AC Power Flow Equations}, the ZIP load models (\ref{eq: active load model}) and (\ref{eq: reactive load model}) can be written as (\ref{eq: SOCP replaced load model}) for each $m \in \MMM$ and $i \in (\BB_m \cup \BBBB_m)$, that is}
\begin{subequations} \label{eq: SOCP replaced load model}
\begin{align}
& p_{mi}^D = \overline p_{mi}^D \Big( \kappa_{mi}^{PI} C_{mii} + \kappa_{mi}^{PC} \sqrt{C_{mii}} + \kappa_{mi}^{PP} \Big), \label{eq: new variable active load model} \\
%
& q_{mi}^D = \overline q_{mi}^D \Big( \kappa_{mi}^{QI} C_{mii} + \kappa_{mi}^{QC} \sqrt{C_{mii}} + \kappa_{mi}^{QP} \Big). \label{eq: new variable reactive load model} 
\end{align}
\end{subequations}
The convex relaxation of these two constraints
can be written as
\begin{subequations} \label{eq: ZIP relaxation}
\begin{align}
& \overline p_{mi}^D \Big( \kappa_{mi}^{PI} C_{mii} + \kappa_{mi}^{PC} \sqrt{C_{mii}} + \kappa_{mi}^{PP} \Big) \label{eq: relaxed active load model} - p_{mi}^D \ge 0, \\
& \overline q_{mi}^D \Big( \kappa_{mi}^{QI} C_{mii} + \kappa_{mi}^{QC} \sqrt{C_{mii}} + \kappa_{mi}^{QP} \Big) \label{eq: relaxed reactive load model} - q_{mi}^D \ge 0.
\end{align}
\end{subequations}
Since the variable $C_{mii}$ is bounded by the closed interval $[C_{mii}^{\min},C_{mii}^{\max}]$, the convex relaxation  (\ref{eq: ZIP relaxation}) can be tighten by introducing the following two hyperplanes which pass through the end points for each $i \in (\BB_m \cup \BBBB_m)$ and $m \in \MMM$:
\begin{subequations} \label{eq: ZIP relaxation hyperplanes}
\begin{align}
& p_{mi}^D - p_{mi}^{D,\min}  \ge \frac{p_{mi}^{D,\max} - p_{mi}^{D,\min}}{C_{mii}^{\max} - C_{mii}^{\min}} \left( C_{mii} - C_{mii}^{\min} \right), \\ 
&   q_{mi}^D - q_{mi}^{D,\min}  \ge \frac{q_{mi}^{D,\max} - q_{mi}^{D,\min}}{C_{mii}^{\max} - C_{mii}^{\min}} \left( C_{mii} - C_{mii}^{\min} \right).
\end{align}
\end{subequations}
\begin{prop} {Constraints (\ref{eq: relaxed active load model}) and (\ref{eq: relaxed reactive load model}) are SOCP representable in terms of $C_{mii}^2$.}
\end{prop}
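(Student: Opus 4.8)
The plan is to isolate the single non-polynomial term $\sqrt{C_{mii}}$ in \eqref{eq: relaxed active load model} and cast the resulting constraint as a convex quadratic (rotated second-order cone). First I would record the structural facts I intend to exploit: the ZIP coefficients $\kappa_{mi}^{PI},\kappa_{mi}^{PC},\kappa_{mi}^{PP}$ (and their reactive analogues) are nonnegative and $\overline p_{mi}^D\ge 0$, so the map $C_{mii}\mapsto \overline p_{mi}^D(\kappa_{mi}^{PI}C_{mii}+\kappa_{mi}^{PC}\sqrt{C_{mii}}+\kappa_{mi}^{PP})$ is concave on $C_{mii}\ge 0$. Hence \eqref{eq: relaxed active load model}, which bounds $p_{mi}^D$ from above by this map, already defines a convex feasible region; the only obstruction to an explicit SOCP form is the square root.

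Next I would isolate that term and square. Rewriting \eqref{eq: relaxed active load model} as $\overline p_{mi}^D\kappa_{mi}^{PC}\sqrt{C_{mii}}\ge p_{mi}^D-\overline p_{mi}^D\kappa_{mi}^{PI}C_{mii}-\overline p_{mi}^D\kappa_{mi}^{PP}$, whose left-hand side is nonnegative, and squaring yields
\begin{equation}
\bigl(\overline p_{mi}^D \kappa_{mi}^{PC}\bigr)^2 C_{mii} \ge \bigl(p_{mi}^D - \overline p_{mi}^D\kappa_{mi}^{PI}C_{mii} - \overline p_{mi}^D\kappa_{mi}^{PP}\bigr)^2 .
\end{equation}
The right-hand side is the square of an affine function of $(C_{mii},p_{mi}^D)$, hence convex, and expanding it produces the term $(\overline p_{mi}^D\kappa_{mi}^{PI})^2 C_{mii}^2$; the left-hand side is affine. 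This is therefore a convex quadratic constraint, i.e.\ a rotated second-order cone, representable in terms of $C_{mii}^2$ as claimed.

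The step I expect to require the most care is the exactness of this squaring, since $a\sqrt{C}\ge b$ with $a\ge 0$ is equivalent to $a^2C\ge b^2$ only when the affine right-hand side $b$ is nonnegative: where $b<0$ the original inequality holds automatically, so the squared form is a strict tightening there. I would resolve this cleanly by the lifted formulation instead of relying on the sign of $b$. Introduce an auxiliary variable $\nu_{mi}\ge 0$ with the SOCP-representable rotated cone $\nu_{mi}^2\le C_{mii}$ (so $\nu_{mi}\le\sqrt{C_{mii}}$) and replace $\sqrt{C_{mii}}$ by $\nu_{mi}$, giving the linear inequality
\begin{equation}
\overline p_{mi}^D\bigl(\kappa_{mi}^{PI}C_{mii}+\kappa_{mi}^{PC}\nu_{mi}+\kappa_{mi}^{PP}\bigr)-p_{mi}^D\ge 0 .
\end{equation}
Because $\kappa_{mi}^{PC}\ge 0$, the left-hand side is nondecreasing in $\nu_{mi}$, so a pair $(C_{mii},p_{mi}^D)$ admits a feasible $\nu_{mi}$ iff it is feasible at the largest admissible value $\nu_{mi}=\sqrt{C_{mii}}$ — which is precisely \eqref{eq: relaxed active load model}. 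Thus the projection of the lifted system recovers the original constraint exactly, and the nonnegativity of the constant-current coefficient together with the upper-bounding direction of the relaxed ZIP constraint are exactly what make the lift tight.

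Finally, \eqref{eq: relaxed reactive load model} is handled verbatim, replacing $(p_{mi}^D,\kappa_{mi}^{PI},\kappa_{mi}^{PC},\kappa_{mi}^{PP},\overline p_{mi}^D)$ by $(q_{mi}^D,\kappa_{mi}^{QI},\kappa_{mi}^{QC},\kappa_{mi}^{QP},\overline q_{mi}^D)$ and invoking $\kappa_{mi}^{QC}\ge 0$; the same auxiliary variable $\nu_{mi}$ (depending only on $C_{mii}$) serves both constraints, so no additional cones are needed.
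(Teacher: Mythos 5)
Your argument is correct, and it diverges from the paper's proof at exactly the point where the paper is least careful. Both you and the paper begin identically: isolate $\kappa_{mi}^{PC}\sqrt{C_{mii}}$ and square, arriving at the quadratic inequality \eqref{eq: squared SOCP representation active load model}. From there the paper finishes with the difference-of-squares identity $C_{mii}=\bigl(\tfrac{C_{mii}+1}{2}\bigr)^2-\bigl(\tfrac{C_{mii}-1}{2}\bigr)^2$, which turns the squared inequality directly into the standard second-order cone \eqref{eq: final squared SOCP representation active load model} (valid since $C_{mii}\ge (V_{mi}^{\min})^2>0$ keeps the cone's right-hand side $\tfrac{C_{mii}+1}{2}$ nonnegative); it does not address the sign issue you raise. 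You instead flag that squaring $a\sqrt{C}\ge b$ is an equivalence only where $b\ge 0$, and repair this with the lifted variable $\nu_{mi}\ge 0$, $\nu_{mi}^2\le C_{mii}$, whose projection you correctly show recovers \eqref{eq: relaxed active load model} exactly by monotonicity in $\nu_{mi}$. The trade-off is worth noting: your lift is an \emph{exact} SOCP representation of the one-sided constraints as stated in the proposition, whereas the paper's squared form represents the strictly smaller two-sided set $\bigl|\tfrac{p_{mi}^D}{\overline p_{mi}^D}-\kappa_{mi}^{PI}C_{mii}-\kappa_{mi}^{PP}\bigr|\le\kappa_{mi}^{PC}\sqrt{C_{mii}}$. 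That tightening is still harmless in context --- every point of the original equality \eqref{eq: new variable active load model} has $b=\kappa_{mi}^{PC}\sqrt{C_{mii}}\ge 0$ and so survives the squaring, meaning the paper's constraint remains a valid (indeed tighter) relaxation of the MINLP --- but as a proof of the literal statement ``\eqref{eq: relaxed active load model} and \eqref{eq: relaxed reactive load model} are SOCP representable,'' your version is the more faithful one, at the cost of one auxiliary variable and one rotated cone (shared, as you note, between the active and reactive constraints).
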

\begin{proof}
We focus on constraint (\ref{eq: relaxed active load model}); constraint (\ref{eq: relaxed reactive load model}) is similarly analyzed. First, we rearrange and square both sides of the constraint for each $i \in (\BB_m \cup \BBBB_m)$  and $m \in \MMM$ such that
\begin{equation} \label{eq: SOCP representation active load model}
\begin{aligned}
& \kappa_{mi}^{PC} \sqrt{C_{mii}} \ge \frac{p_{mi}^D}{\overline p_{mi}^D} - \kappa_{mi}^{PI} C_{mii} - \kappa_{mi}^{PP}
\end{aligned}
\end{equation}
\begin{equation} \label{eq: squared SOCP representation active load model}
\begin{aligned}
& \left( \kappa_{mi}^{PC} \right)^2 C_{mii} \ge \left( \frac{p_{mi}^D}{\overline p_{mi}^D} - \kappa_{mi}^{PI} C_{mii} - \kappa_{mi}^{PP} \right)^2.
\end{aligned}
\end{equation}

Note that $ C_{mii} = (\frac{C_{mii}+1}{2})^2 - (\frac{C_{mii}-1}{2})^2 $, therefore (\ref{eq: squared SOCP representation active load model}) can be written as the following SOCP constraint for each $i \in (\BB_m \cup \BBBB_m)$ and $m \in \MMM$:
\begin{equation} \label{eq: final squared SOCP representation active load model}
\begin{aligned}
\left( \kappa_{mi}^{PC} \right)^2 &  \left( \frac{C_{mii}+1}{2}\right)^2 \ge  \left( \kappa_{mi}^{PC} \right)^2  \left(\frac{C_{mii}-1}{2}\right)^2 \\
& + \left( \frac{p_{mi}^D}{\overline p_{mi}^D} - \kappa_{mi}^{PI} C_{mii} - \kappa_{mi}^{PP} \right)^2.
\end{aligned}
\end{equation}
\end{proof}
\vspace{-5pt}
\subsubsection{Linearizion of the Bilinear Terms} \label{subsubsec: Linearizion of the Bilinear Terms}
Finally, let us linearize the bilinear terms $x_{mi} p_{mi}^D$ and $x_{mi} q_{mi}^D$ in (\ref{eq: active power balance})-(\ref{eq: boundary reactive power balance}) and (\ref{eq: General Cut}), where each bilinear term involves the product of a binary variable and a nonnegative continuous variable. We linearize these disjunctive terms via the big-M method by introducing auxiliary semi-continuous variables $\rho_{mi} := x_{mi} p_{mi}^D$ and $\sigma_{mi} := x_{mi} q_{mi}^D$ and defining additional constraints for each $i \in (\BB_m \cup \BBBB_m)$  and $m \in \MMM$:
\begin{subequations} \label{eq: Bilinear Linearizion}
\begin{align}
& -\left( 1 - x_{mi} \right) \check{M}_{mi}^p  \le \rho_{mi} - p_{mi}^D \le \check{M}_{mi}^p \left( 1 - x_{mi} \right), \\
& - x_{mi} \check{M}_{mi}^p \le \rho_{mi} \le \check{M}_{mi}^p x_{mi}, \\
& - \left( 1 - x_{mi} \right) \check{M}_{mi}^q \le \sigma_{mi} - q_{mi}^D \le \check{M}_{mi}^q \left( 1 - x_{mi} \right), \\
& - x_{mi} \check{M}_{mi}^q \le \sigma_{mi} \le \check{M}_{mi}^q x_{mi}.
\end{align}
\end{subequations}

In order to reduce the integrality gap in (\ref{eq: Bilinear Linearizion}), the big-Ms (i.e., $\check{M}_{mi}^p$ and $\check{M}_{mi}^q$) should be as small as possible, and it is usually challenging to determine correct values for them to use for each specific implementation. However, in this particular application, we can set $\check{M}_{mi}^p=\overline p_{mi}^D$ and $\check{M}_{mi}^q=\overline q_{mi}^D$. Note that these data (i.e., the upper bounds of active and reactive loads) are usually available in any system.
Now, substituting the auxiliary variables $\rho_{mi}$ and $\sigma_{mi}$ into the constraints (\ref{eq: active power balance})-(\ref{eq: boundary reactive power balance}), we get the linear constraints (\ref{eq: linearized active power balance})-(\ref{eq: linearized reactive power balance}) for each $m \in \MMM$, $i \in \BB_m$, and also the constraints (\ref{eq: linearized boundary active power balance})-(\ref{eq: linearized boundary reactive power balance}) for each $m \in \MMM$, $i \in \BBBB_m$:
\begin{subequations} \label{eq: Substitute bilinear terms}
\begin{align}
& \sum \limits_{g:(g,i) \in {\OO}_m} p_{mg}^G - \rho_{mi} = \sum \limits_{\left( {i,j} \right) \in \LL_m }  f_{mij}^P, \label{eq: linearized active power balance} \\
& \sum\limits_{g:(g,i) \in  {\OO}_m} q_{mg}^G - \sigma_{mi} = \sum \limits_{\left( {i,j} \right) \in \LL_m } f_{mij}^Q, \label{eq: linearized reactive power balance} \\
& \sum \limits_{g:(g,i) \in {\OO}_m} {p_{mg}^G}  - \rho_{mi} + \Delta {P_m} = \sum \limits_{\left( {i,j} \right) \in \LL_m } f_{mij}^P, \label{eq: linearized boundary active power balance} \\
& \sum \limits_{g:(g,i) \in {\OO}_m} {q_{mg}^G}  - \sigma_{mi} + \Delta {Q_m}= {\rm{ }}\sum\limits_{\left( {i,j} \right) \in \LL_m } f_{mij}^Q. \label{eq: linearized boundary reactive power balance}
\end{align}
\end{subequations}
\textcolor{myblue}{
Complementarily, the frequency constraints (\ref{eq: General Cut}) can be written as (\ref{eq: Final Cut }) for each $\s \subseteq \NNN, \s  \ne \emptyset$, where the indicator function $\mathcal{I}_M$ is modeled using the big-M method and the bilinear terms are replaced with their linear counterparts: }
\begin{subequations} \label{eq: Final Cut }
\begin{align}
& \notag \Delta \omega_N^{\min} \le \alpha_{\s} \sum \limits_{m \in \BBB _{\s}  } { \Big( { - \Delta P_m^0 + \sum \limits_{i \in \BB _m } {\left( {p_{mi}^D - \rho_{mi} } \right)} } \Big) } \\
& + \sum \limits_{ (m,k) \in \LLL(\s) } {\left( 1 - Z_{mk} \right)} M_N + \sum \limits_{\left( {m,k} \right) \in \delta (\s)} {\left( {{Z_{mk}}} \right)} M_N, \label{eq: Final Cut: nadir min} \\
& \notag \textcolor{myblue}{   \Delta \omega_N^{\max} \ge \alpha_{\s} \sum \limits_{m \in \BBB _{\s}  } { \Big( { - \Delta P_m^0 + \sum \limits_{i \in \BB _m } {\left( {p_{mi}^D - \rho_{mi} } \right) } } \Big) }   }\\
& \textcolor{myblue}{  - \sum \limits_{ (m,k) \in \LLL(\s) } {\left( 1 - Z_{mk} \right)} M_N  - \sum \limits_{\left( {m,k} \right) \in \delta (\s)} {\left( {{Z_{mk}}} \right)} M_N,  }\label{eq: Final Cut: nadir max} \\
& \notag \Delta \omega_{ss}^{\min} \le \beta_{\s} \sum \limits_{m \in \BBB _{\s}  } { \Big( { - \Delta P_m^0 + \sum \limits_{i \in \BB _m } {\left( {p_{mi}^D - \rho_{mi} } \right) } } \Big) } \\
& + \sum \limits_{ (m,k) \in \LLL(\s) } {\left( 1 - Z_{mk} \right)} M_{ss}  + \sum \limits_{\left( {m,k} \right) \in \delta (\s)} {\left( {{Z_{mk}}} \right)} M_{ss}, \label{eq: Final Cut: steady state min} \\
 & \notag \textcolor{myblue}{   \Delta \omega_{ss}^{\max} \ge \beta_{\s} \sum \limits_{m \in \BBB _{\s}  } { \Big( { - \Delta P_m^0 + \sum \limits_{i \in \BB _m } {\left( {p_{mi}^D - \rho_{mi} } \right)  } } \Big) }  } \\
 & \textcolor{myblue}{   - \sum \limits_{ (m,k) \in \LLL(\s) } {\left( 1 - Z_{mk} \right)} M_{ss} - \sum \limits_{(m,k) \in \delta (\s)} {\left( Z_{mk} \right)} M_{ss}. } \label{eq: Final Cut: steady state max}
\end{align}
\end{subequations}
\subsubsection{\textcolor{myblue}{Overall MISOCP Formulation}}
\textcolor{myblue}{Before proceeding further with the analysis, let us define the set $\mathcal{R}$ as the set of constraints (\ref{eq: internal active flow limit loss}), (\ref{eq: external active power balance}), (\ref{eq: external reactive power balance}), (\ref{eq: external active flow limit})-(\ref{eq: external active flow limit loss}), (\ref{eq: generator ramp limits})-(\ref{eq: binary variables}), (\ref{eq: S = -S  C = C}), (\ref{eq: tilde S = -S  C = C}), (\ref{eq: SOCP relaxation counterparts of S2 + C2 = cc})-(\ref{eq: SOCP external voltage bounds}), (\ref{eq: ZIP relaxation}), (\ref{eq: ZIP relaxation hyperplanes}), (\ref{eq: Bilinear Linearizion}), and (\ref{eq: Substitute bilinear terms}). Recall that $\mathcal{F}$ is the set of frequency constraints. Now, we can formally define $\mathcal{MISOCP}(\mathcal{R},\mathcal{F})$, as the MISOCP relaxation of the multi-\M ~resilient operation problem:}
\begin{equation} \label{eq: Final self healing MISOCP relaxation}
\begin{aligned}
\textcolor{myblue}{ \psi} \textcolor{myblue}{=} & \textcolor{myblue}{\min \sum \limits_{m \in \MMM} \sum \limits_{i \in \BB _m } {\lambda _{mi}^{VOLL} \left( {1 - x_{mi}} \right) \bar p_{mi}^D} }\\
& \notag \textrm{ \textcolor{myblue}{ s.t. $\:$ (\ref{eq: internal active flow limit loss}), (\ref{eq: external active power balance}), (\ref{eq: external reactive power balance}), (\ref{eq: external active flow limit})-(\ref{eq: external active flow limit loss}), (\ref{eq: generator ramp limits})-(\ref{eq: binary variables}), (\ref{eq: S = -S  C = C}),} }\\
& \notag \qquad \: \textrm{ \textcolor{myblue}{ (\ref{eq: tilde S = -S  C = C}), (\ref{eq: SOCP relaxation counterparts of S2 + C2 = cc})-(\ref{eq: SOCP external voltage bounds}), (\ref{eq: ZIP relaxation}), (\ref{eq: ZIP relaxation hyperplanes}), (\ref{eq: Bilinear Linearizion})-(\ref{eq: Final Cut }).}}
%
\end{aligned}
\end{equation}

\textcolor{myblue}{It remains to deal with the exponential number of constraints in $\mathcal{F}$. This is the topic of the next section.}
%
\vspace{-5pt}
\subsection{Cutting Plane Algorithm for Frequency Constraints}
In this section, we propose a cutting plane approach to solve $\mathcal{MISOCP}(\mathcal{R},\mathcal{F})$. \textcolor{myblue}{The idea is to construct $\{ \mathcal{F}_k \}_{k \ge 0}$, that is a sequence of relaxations of the set $\mathcal{F}$, and dynamically update $\mathcal{F}_k$ to obtain stronger relaxations in each iteration. Recall that the set $\mathcal{F}$ contains exponentially many frequency constraints.}

With this aim in mind, let $\mathcal{C}_{\s}^1$, $\mathcal{C}_{\s}^2$, $\mathcal{C}_{\s}^3$, and $\mathcal{C}_{\s}^4$ denote respectively the constraints (\ref{eq: Final Cut: nadir min}), (\ref{eq: Final Cut: nadir max}), (\ref{eq: Final Cut: steady state min}), and (\ref{eq: Final Cut: steady state max}), for a given connected component $\s$ of the linking grid, where $\s \subseteq \NNN, \s  \ne \emptyset$. Moreover, let the graph $\tilde{\mathcal{N}}^*$ represent the configuration of the linking grid for a given solution to $\mathcal{MISOCP}(\mathcal{R},\mathcal{F}_k)$, and let  $\mathcal{Q} = \{ \s _{\upsilon_1}, \s _{\upsilon_2}, ..., \s _{\upsilon_N} \}$ denote the set of connected components of $\tilde{\mathcal{N}}^*$ where $ \{ {\upsilon_1}, {\upsilon_2}, ..., {\upsilon_N} \} \subseteq \{1,2,..., |\BBB|\}$. For each component in $\mathcal{Q}$, we check the inequalities $\{ \mathcal{C}_{\s}^{\gamma} {\}}_{\gamma=1}^{\textcolor{myblue}{4}} $; if any frequency violation is detected, the corresponding valid inequality will be added to the set $\mathcal{F}_k$.
\textcolor{myblue}{In other words, let $\mathcal{A}$ be the set of feasible solutions to the problem $\mathcal{MISOCP}(\mathcal{R},\mathcal{F})$. In each iteration, if an optimal solution of $\mathcal{MISOCP}(\mathcal{R},\mathcal{F}_k)$ is in the set $\mathcal{A}$, we stop since we have already found an optimal solution to $\mathcal{MISOCP}(\mathcal{R},\mathcal{F})$. Otherwise, we generate a cut and add it to $\mathcal{F}_{k}$ to separate the point from the set $\mathcal{A}$ and obtain stronger relaxations in the next iteration. Algorithm \ref{Alg: Cutting generation algorithm} provides the details of the proposed cutting plane approach.}  




%


\begin{algorithm} [t]
 \caption{Multi-\M ~resilient operation algorithm}
 \label {Alg: Cutting generation algorithm}
 \begin{algorithmic}[1]
 \STATE Initialize $k \gets 0$, $\mathcal{F}_k \gets  \emptyset$, Flag $\gets$ \texttt{NO} 
 \WHILE{Flag $=$ \texttt{NO} }
 \STATE \textcolor{myblue}{ Solve $\mathcal{MISOCP}(\mathcal{R},\mathcal{F}_k)$ to obtain the graph $\tilde{\mathcal{N}}^*$ representing the optimal configuration of the linking grid}

\STATE \textcolor{myblue}{Compute $\mathcal{Q} = \{ \s _{\upsilon_1}, \s _{\upsilon_2}, ..., \s _{\upsilon_N} \}$ as the set of connected components of $\tilde{\mathcal{N}}^*$} 
\STATE Flag $\gets$ \texttt{YES}
 \FOR {$ \upsilon = {\upsilon_1}$ \textbf{to} ${\upsilon_N}$}
     \FOR {$\gamma =1$ \textbf{to} ${\textcolor{myblue}{4}}$}
        \IF {$ \s _{\upsilon}$ violates $\mathcal{C}_{\s _ \upsilon}^{\gamma} $}
          \STATE Flag $\gets$ \texttt{NO}
          \STATE $\mathcal{F}_k \gets \mathcal{F}_k \cup \{ \mathcal{C}_{\s _ \upsilon}^{\gamma} \}$
        \ENDIF
     \ENDFOR
 \ENDFOR
 \STATE $k \gets k+1$
  \ENDWHILE
 \end{algorithmic} 
 \end{algorithm}
As can be seen, in Algorithm \ref{Alg: Cutting generation algorithm}, we need a function to return the connected components of the undirected graph $\tilde{\mathcal{N}}^*$. Recall that a connected component of an undirected graph is a maximal connected subgraph of the graph. This function can be implemented via depth-first or breadth-first algorithm. See \cite{West-2001-graph-theory} for details.
%
%
\begin{theorem} Algorithm \ref{Alg: Cutting generation algorithm} converges to an optimal solution of the \textcolor{myblue}{MISOCP-based} \MM ~resilient operation problem, \textcolor{myblue}{i.e., $\mathcal{MISOCP}(\mathcal{R},\mathcal{F})$,} in a finite number of iterations.
\end{theorem}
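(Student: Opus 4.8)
The plan is to read Algorithm~\ref{Alg: Cutting generation algorithm} as a lazy-constraint cutting-plane method over the \emph{finite} family $\mathcal{F}$, and to combine a strict-monotonicity argument (for termination) with a redundancy argument (for correctness), closing with the usual relaxation sandwich (for optimality). First I would record that $\mathcal{F}$ is finite: it consists of the four inequalities $\mathcal{C}_\s^1,\dots,\mathcal{C}_\s^4$ of (\ref{eq: Final Cut: nadir min})--(\ref{eq: Final Cut: steady state max}) for each nonempty $\s\subseteq\NNN$, and each such constraint is determined entirely by the node set $\BBB_\s$ (which fixes both $\LLL(\s)$ and $\delta(\s)$ through (\ref{eq: edges of subgraph})--(\ref{eq: cutset of subgraph})). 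Hence $\mathcal{F}$ contains at most $4\,(2^{|\BBB|}-1)$ distinct constraints, so $|\mathcal{F}|<\infty$, albeit exponential in $|\BBB|$.

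Next I would establish strict progress of the nested relaxations $\mathcal{F}_0\subseteq\mathcal{F}_1\subseteq\cdots\subseteq\mathcal{F}$. At iteration $k$ the solver returns a point feasible for $\mathcal{MISOCP}(\mathcal{R},\mathcal{F}_k)$, so that point \emph{satisfies every constraint already in} $\mathcal{F}_k$. The separation test adds $\mathcal{C}_{\s_\upsilon}^\gamma$ only when $\s_\upsilon$ \emph{violates} it; a violated constraint cannot lie in $\mathcal{F}_k$, hence $|\mathcal{F}_{k+1}|>|\mathcal{F}_k|$ whenever the flag is reset to \texttt{NO}. Since the $\mathcal{F}_k$ form a strictly increasing chain of subsets of the finite set $\mathcal{F}$, the ``add'' branch can fire at most $|\mathcal{F}|$ times; thereafter the flag stays \texttt{YES} and the loop halts. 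This gives finite termination.

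The crux is then to show that the terminal point $(x^*,Z^*,\dots)$, which is optimal for the last relaxation, is actually feasible for all of $\mathcal{F}$ even though we only screened the connected components $\mathcal{Q}=\{\s_{\upsilon_1},\dots,\s_{\upsilon_N}\}$ of the terminal configuration $\tilde{\mathcal{N}}^*$. I would prove a redundancy dichotomy for an arbitrary nonempty $\s\subseteq\NNN$, leaning on the spanning-tree characterization (\ref{eq: IP Spanning Tree 1})--(\ref{eq: IP Spanning Tree 4}) that was already shown equivalent to (\ref{eq: General Cut}), and hence to its big-$M$ linearization (\ref{eq: Final Cut }): (i) if $\BBB_\s$ is \emph{not} connected through active links, the embedded spanning-tree program is infeasible, contributing $+\infty$ and rendering $\mathcal{C}_\s^\gamma$ vacuous; equivalently in the big-$M$ form some internal link is open, so $\sum_{(m,k)\in\LLL(\s)}(1-Z^*_{mk})M\ge M$; (ii) if $\BBB_\s$ is active-connected but some energized link leaves it, then $\sum_{(m,k)\in\delta(\s)}Z^*_{mk}M\ge M$; in both cases, because $M_N,M_{ss}$ dominate the bounded net-mismatch term, the added $+M$ pushes the right-hand side of each \emph{min} inequality above its lower bound while the subtracted $-M$ pulls the right-hand side of each \emph{max} inequality below its upper bound, so all four $\mathcal{C}_\s^\gamma$ hold trivially; and (iii) the only remaining case, active-connected with empty active cutset, is exactly a member of $\mathcal{Q}$, which was inspected in the final non-adding pass and, since the flag stayed \texttt{YES}, is satisfied. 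These three cases are exhaustive, so the terminal point satisfies every constraint of $\mathcal{F}$ and therefore lies in the feasible region of $\mathcal{MISOCP}(\mathcal{R},\mathcal{F})$.

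Finally I would close the gap by a sandwich. Writing $\psi_k$ for the optimal value of $\mathcal{MISOCP}(\mathcal{R},\mathcal{F}_k)$ and $\psi$ for that of $\mathcal{MISOCP}(\mathcal{R},\mathcal{F})$ in (\ref{eq: Final self healing MISOCP relaxation}), the inclusion $\mathcal{F}_k\subseteq\mathcal{F}$ gives $\psi_k\le\psi$; at termination the returned point attains $\psi_k$ and is feasible for the full problem, so $\psi\le\psi_k$. Hence $\psi_k=\psi$ and the terminal solution is optimal for $\mathcal{MISOCP}(\mathcal{R},\mathcal{F})$. I expect finiteness, monotonicity, and the sandwich to be routine; \textbf{the delicate point} is the redundancy dichotomy of paragraph three, specifically quantifying ``sufficiently large'' $M_N,M_{ss}$ so that the bounded term $\alpha_\s\sum_{m\in\BBB_\s}\!\big(-\Delta P_m^0+\sum_{i}(p_{mi}^D-\rho_{mi})\big)$ can never overturn the big-$M$ slack, and confirming via (\ref{eq: IP Spanning Tree 1})--(\ref{eq: IP Spanning Tree 4}) that cases (i)--(iii) genuinely exhaust all nonempty $\s\subseteq\NNN$, so that screening only the components in $\mathcal{Q}$ certifies the entire exponential family.
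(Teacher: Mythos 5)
Your proposal is correct, and its termination core is the same as the paper's: the constraint family is finite, each iteration that fails the check adds at least one constraint not already in $\mathcal{F}_k$, so the loop stops after at most $|\mathcal{F}|$ iterations (the paper counts $4r$ with $r$ the number of possible connected components of $\NNN$, a slightly tighter count than your $4(2^{|\BBB|}-1)$ because only connected subgraphs are ever separated; your phrasing also avoids the paper's imprecise claim that each component is ``examined at most once'' --- the same component can recur, as in iterations $7$ and $15$ of Table~\ref{tab: Convergence Process MISOCP}, but each \emph{constraint} is added at most once). Where you go genuinely beyond the paper is on correctness: the paper's proof stops at the counting argument and simply asserts that if the incumbent satisfies (\ref{eq: Final Cut }) the algorithm has converged, leaving unaddressed both why screening only the components in $\mathcal{Q}$ certifies the entire exponential family and why the terminal relaxation value equals $\psi$. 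Your redundancy dichotomy and the relaxation sandwich supply exactly these missing pieces and are the right way to complete the argument. One caveat on your case (i): ``$\BBB_{\s}$ not active-connected'' does not by itself force an open edge inside $\LLL(\s)$ --- if $\s$ is a disjoint union of components whose internal $\LLL$-edges are all closed and which have no $\LLL$-edges between them, both big-$M$ sums in (\ref{eq: Final Cut }) vanish even though $\s$ is disconnected, so the big-$M$ surrogate does not automatically relax that constraint the way the indicator form (\ref{eq: General Cut}) does. To close this, either restrict $\mathcal{F}$ to connected subgraphs of $\NNN$ (which is what the paper's count $4r$ implicitly does) or run the redundancy argument against (\ref{eq: General Cut}) directly, where $\mathcal{I}_M(\textnormal{$\s$ is connected})$ handles that subcase.
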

\begin{proof}
Let $x_{mi}^*$ and $Z_{mk}^*$ be an optimal solution to the problem $\mathcal{MISOCP}(\mathcal{R},\mathcal{F}_0)$ where $i \in (\BB_m \cup \BBBB_m), \: m \in \MMM$, $(m,k) \in \LLL$, and $\mathcal{F}_0 = \emptyset$. If $x_{mi}^*$ and $Z_{mk}^*$ satisfy (\ref{eq: Final Cut }), then Algorithm \ref{Alg: Cutting generation algorithm} converges to the optimal solution in one iteration. Otherwise, in each iteration, at least one constraint will be added to the set $\mathcal{F}_k$. We observe that the total number of constraints in (\ref{eq: Final Cut }) is ${\textcolor{myblue}{4}} r$, where $r$ is the number of possible connected components of $\NNN$. Since each connected component is examined at most once in this algorithm, the number of iterations needed for the convergence of the algorithm is less than ${\textcolor{myblue}{4}} r$.
\end{proof}
\section{Computational Experiments} \label{Sec: Computational Experiments}
In this section, the performance of the proposed framework for the multi-\M ~resilient operation problem is thoroughly evaluated. All simulations are conducted on a $64$-bit PC with Intel Core i$7$ CPU $2.8$ GHz processor and $16$ GB RAM. The algorithm is implemented in the GAMS IDE environment \cite{GAMS}. We use BONMIN V$1.8$ \cite{BONMIN} to solve MINLPs and CPLEX V$12.4$ \cite{CPLEX} to solve the MISOCPs. Moreover, we use the $39$-bus \MM ~network (depicted in Fig. \ref{Fig: Multi Microgrid}) as our test system. This network is composed of six DERs, whose technical data are given in Table \ref{tab: Technical Data of DERs}. Feeders’ and loads' data are adopted from different portions of a standard IEEE distribution test system whose data can be found in \cite{Baran-1989-IEEE33Bus}. To have a more realistic study, five different load types (i.e., general, residential, agricultural, commercial, and industrial) with different VOLLs are taken into account (see Fig. 5 in \cite{Amin-2018-HICSS}). Finally, the \M s' dynamic data is given in Table \ref{tab: Dynamic Parameters}. 
%
\vspace{-10pt}
\begin{table} [htb]
\centering
\caption{Technical Data of DERs}
\label{tab: Technical Data of DERs}
\hspace{-5mm}
\begin{tabular}{c|c|c|c|c|c|c|}
\cline{2-7}
                                          & \multicolumn{6}{c|}{\textbf{DERs}}   \\ \hline
\multicolumn{1}{|c|}{\textbf{Parameters}}  & G$_1$  
                            & G$_2$
                                    & G$_3$ 
                                            & G$_4$  
                                                    & G$_5$  
                                                            & G$_6$  \\ \hline 
\multicolumn{1}{|c|} {$p^{G,\min }$ [$\times 100$ kW]}        & $1$      & $1$    & $1$    & $1$    & $1$    & $1$  \\ \hline
\multicolumn{1}{|c|} {$p^{G,\max }$[$\times 100$ kW]}         & $5$      & $2$    & $5$    & $2$    & $2$    & $5$  \\ \hline
\multicolumn{1}{|c|} {$q^{G,\min }$[$\times 100$ kVAr]}       & $-5$     & $-2$   & $-5$   & $-2$   & $-2$   & $-5$   \\ \hline
\multicolumn{1}{|c|} {$q^{G,\max }$[$\times 100$ kVAr]}       & $5$      & $2$    & $5$    & $2$    & $2$    & $5$     \\ \hline
\multicolumn{1}{|c|} {$R^D \backslash R^D$[$\times 100$ kW/min]}  & $2$   & $1$  & $2$    & $1$    & $1$    & $2$ \\ \hline
\end{tabular}
\end{table}
%
\vspace{-10pt}
\begin{table} [htb]
\centering
\caption{Dynamic Parameters of the VSC Controller in each \M}
\label{tab: Dynamic Parameters}
\begin{tabular}{|l|l||l|l||l|l|}
\hline
Parameter            & Value    & Parameter                        & Value   & Parameter      & Value   \\ \hline
$H$ [sec.]          & $0.9$    & $D$                              & $1$       & $T'$ [sec.]     & $0.1$    \\ \hline
$R$                 & $0.08$   & $\Delta \omega_N$ [Hz]    & $0.5$     & $V_{Base}$[kV]  & $12.66$ \\ \hline
$T$ [sec.]          & $0.008$  & $\Delta \omega_{ss}$ [Hz] & $0.1$     & $S_{Base}$[MW]  & $5$   \\ \hline
\end{tabular}
\end{table}

We assume that all \M s in Fig. \ref{Fig: Multi Microgrid} were initially connected to the main grid through the dashed lines (in red). Subsequent to islanding, these lines along with the main circuit breaker trip. The proposed MISOCP-based resilient operation approach determines the optimal strategy which may include re-closing the dashed lines and switching the dotted lines (in gray), leading to different configurations for the distribution network. In order to evaluate our framework, we compare it with the following two schemes:
\begin{itemize}
\item \textit{MINLP-Based Scheme:} In this scheme, we follow our resilient operation scheme; however, we use $\mathcal{MINLP}(\mathcal{X},\mathcal{F})$ as the decision support tool in Algorithm \ref{Alg: Cutting generation algorithm}.

\item \textit{Conventional UFLS Scheme:} In this scheme, subsequent to islanding of the distribution network, each \M ~individually enters the island mode where the conventional UFLS relays will curtail the necessary blocks of loads until reaching the equilibrium point. The settings of these relays are obtained from \cite{Amin-2018-HICSS}.

\end{itemize}
%
%
%
\vspace{-7pt}
\subsection{Comparison with the MINLP-Based Scheme}
\subsubsection{\textcolor{myblue}{Solution and Computation Time}}
Table \ref{tab: Computation Time} provides a comparison between the MINLP-based and MISOCP-based schemes considering different severities for the islanding event (we define severity as the amount of power flow from the main grid to the distribution network before the islanding). The computation times in this table are obtained using a relative optimality criterion (i.e., Optcr) of zero.

As can be seen, although the computation time is considerably diminished in the MISOCP-based model, the solution quality (in terms of load curtailment) is the same, and this is highly effective in precarious situations such as the emergency management of distribution networks, since prompt measures can keep electromechanical dynamics away from becoming stability threatening.
%
\vspace{-5pt}
\begin{table} [htb]
\centering
\caption{Comparison Between the MISOCP and MINLP Models}
\label{tab: Computation Time}
\begin{tabular}{|c|c|c|c|c|}
\hline
\multirow{2}{*}{\textbf{\begin{tabular}[c]{@{}c@{}}Islanding\\ Severity\\  {[}kW{]}\end{tabular}}} & \multicolumn{2}{c|}{\textbf{MISOCP-Based Scheme}}                                                                                         & \multicolumn{2}{c|}{\textbf{MINLP-Based Scheme}}                                                                                          \\ \cline{2-5} 
                                                                                                   & \begin{tabular}[c]{@{}c@{}}Curtailment\\  {[}kW{]}\end{tabular} & \begin{tabular}[c]{@{}c@{}}Computation \\ time {[}sec.{]}\end{tabular} & \begin{tabular}[c]{@{}c@{}}Curtailment\\  {[}kW{]}\end{tabular} & \begin{tabular}[c]{@{}c@{}}Computation\\  time {[}sec.{]}\end{tabular} \\ \hline
$2700 $           & $2248.4$                                                          & $57.21$                                                                   & $2248.8 $                           & $2978.8$                                                                   \\ \hline
$3200  $          & $2725 $                                                           &$ 52.74   $                                                                & $2725$                             & $7185.7$                                                                  \\ \hline
$3700$                                                                                               & $3208.4$                                                          &$ 73.58 $                                                                  & $3209.4$                                                          & $9593.5 $                                                                  \\ \hline
\end{tabular}
\end{table}
\subsubsection{\textcolor{myblue}{Convergence}}
In order to see more details about the convergence of Algorithm \ref{Alg: Cutting generation algorithm}, let us analyze the second islanding event (with the severity of $3200$ kW). For this event, Table \ref{tab: Convergence Process MISOCP} provides the objective function value, the cardinality of the set $\mathcal{F}_k$, the amount of load shedding, the configuration of the \MM ~network, \textcolor{myblue}{and the elapsed time} in each iteration of the algorithm while solving $\mathcal{MISOCP}(\mathcal{R},\mathcal{F})$. 
Accordingly, the algorithm converges in $15$ iterations. \textcolor{myblue}{In each iteration, a set of cuts are generated to separate a given solution of $\mathcal{MISOCP}(\mathcal{R},\mathcal{F}_k)$, that is a mixed integer solution, from the set $\mathcal{A}$. This separation in each iteration leads to an interplay between load shedding adjustments and network topology control, demonstrated in the $4^{th}$ and $5^{th}$ columns of Table \ref{tab: Convergence Process MISOCP}. It must be emphasized that when a mixed integer solution is cut off, the corresponding integer solution (i.e., the projection onto the space of integer variables) may not be cut off. For instance, in the $7^{th}$ iteration in in Table \ref{tab: Convergence Process MISOCP}, the amount of load shedding is $2295$~kW and the connected edges of the linking grid are $l_1$, $l_3$, and $l_5$ (see Fig. \ref{Fig: Multi Microgrid}(b)). Although a valid inequality cuts off this mixed integer solution in the next iteration, the corresponding integer solution appears again in the $15^{th}$ iteration with a different amount of load shedding.} 

As another interesting result, in the eighth iteration, the distribution network is partitioned into two sub-systems and the objective function is increased by $8.5$\%. Eventually, in the $15^{th}$ iteration, the optimal resilience improvement strategy is achieved while the distribution system is reconfigured as one connected component. 

For the sake of comparison, Table \ref{tab: Convergence Process in MINLP} provides the outputs of Algorithm \ref{Alg: Cutting generation algorithm} while solving $\mathcal{MINLP}(\mathcal{X},\mathcal{F})$. As can be seen, the algorithm converges in a more number of iterations and the computation time of each iteration is considerably more than that of the MISOCP-based model. The final solutions (the objective function, load curtailment, and configuration of the linking grid), nevertheless, are quite the same as the ones in Table \ref{tab: Convergence Process MISOCP}.       
   
\begin{table} [htb]
\centering
\caption{Convergence Process of the Proposed Algorithm While Solving $\mathcal{MISOCP}(\mathcal{R},\mathcal{F})$}
\label{tab: Convergence Process MISOCP}
\begin{tabular}{|M{0.19cm}|M{1cm}|M{0.39cm}|M{1.2cm}|M{1.9cm}|M{1.5cm}|}
\hline
$k$   &  \begin{tabular}[c]{@{}l@{}} $\quad \: \: \psi $ \\ $[ \times 100 \: \$ ]$ \end{tabular} & $|\mathcal{F}_k|$ & \begin{tabular}[c]{@{}l@{}} Curtailment \\ $[ \times 100 \: \textrm{kW} ]$ \end{tabular} & \textcolor{myblue}{Connected edges of $\NNN$} & \textcolor{myblue}{Elapsed time/iter [sec.]}
\\ \hline \hline
$0$        & $1579.09$         & $0$   & $22.98$         & $l_1,l_2,l_3 $	& $6.0$		 \\ \hline
$1$        & $1579.09$         & $1$   & $22.93$         & $l_2,l_4,l_5 $	& $3.8$		\\ \hline
$2$        & $1579.09$         & $2$   & $22.91$         & $l_2,l_3,l_4 $	& $4.0$	    \\ \hline
$3$        & $1579.09$         & $3$   & $23.07$         & $l_1,l_3,l_4 $	& $4.6$	   \\ \hline
$4$        & $1579.09 $        & $4$   & $22.87$         & $l_3,l_4,l_5 $	& $3.2$	   \\ \hline
$5$        & $1579.09 $        & $5$   & $22.93$         & $l_1,l_2,l_5 $	& $2.9$	   \\ \hline
$6$        & $1579.09 $        & $6$   & $23.03$         & $l_1,l_2,l_4 $	& $2.6$	    \\ \hline
$7$        & $1579.09 $        & $7$   & $22.95$         & $l_1,l_3,l_5 $	& $	2.7$	\\ \hline
$8$        & $1714.69 $        & $8$   & $23.40$         & $l_2,l_3,l_5	$	& $1.6	$	 \\ \hline
$9$        & $2069.38 $        & $10$  & $24.29$         & $l_1,l_4      $   & $5.9	$	 \\ \hline
$10$       & $2081.35 $        & $12$  & $24.92$         & $l_1,l_5     $    & $3.9 $		 \\ \hline
$11 $      & $2081.35 $        & $13$  & $24.92$         & $l_4,l_5     $    & $2.0	$	\\ \hline
$12 $      & $2086.23 $        & $14$  & $24.11$         & $l_2,l_5     $   	& $	2.4$	 \\ \hline
$13 $      & $2086.23 $        & $15$  & $24.15$         & $l_3,l_5     $    & $2.2	$	 \\ \hline
$14 $      & $2086.23 $        & $16$  & $24.07$         & $l_2,l_3     $    & $	2.7$	 \\ \hline
$15 $      & $2235.96 $        & $17$  & $27.25$         & $l_1,l_3,l_5 $	& $2.4	$	\\ \hline
\end{tabular}
\end{table}
\vspace{-3pt}
\begin{table} [htb]
\centering
\caption{Convergence Process of the Proposed Algorithm While Solving $\mathcal{MINLP}(\mathcal{X},\mathcal{F})$}
\label{tab: Convergence Process in MINLP}
\begin{tabular}{|M{0.19cm}|M{1cm}|M{0.39cm}|M{1.2cm}|M{1.9cm}|M{1.5cm}|}
\hline
$k$   &  \begin{tabular}[c]{@{}l@{}} $ \quad \:\: \vartheta $ \\ $[ \times 100 \: \$ ]$ \end{tabular} & $|\mathcal{F}_k|$ & \begin{tabular}[c]{@{}l@{}} Curtailment \\ $[ \times 100 \: \textrm{kW} ]$ \end{tabular} & \textcolor{myblue}{Connected edges of $\NNN$} & \textcolor{myblue}{Elapsed time/iter [sec.]}
\\ \hline \hline
$0 $       & $1579.09  $       &$ 0  $ & $22.89  $       &$ l_1,l_3,l_4   $           &$664.2$    \\ \hline
$1 $       & $1579.09  $       &$ 1  $ & $22.88  $       &$ l_1,l_2,l_4    $          &$478.1$         \\ \hline
$2 $       & $1579.09  $       &$ 2  $ & $22.87  $       & $l_1,l_2,l_3,l_5$          &$391.9$          \\ \hline
$3 $       & $1579.09  $       &$ 3  $ & $22.86  $       & $l_1,l_2,l_5 $             &$329.8$            \\ \hline
$4 $       & $1579.09  $       &$ 4  $ & $22.87  $       & $ l_2,l_3,l_4,l_5$         &$341.4$           \\ \hline
$5 $       & $1579.09  $       &$ 5  $ & $22.84  $       &$ l_2,l_4,l_5  $            &$246.3$           \\ \hline
$6 $       & $1579.09  $       &$ 6  $ & $22.86  $       &$ l_1,l_3,l_5  $            &$285.5$         \\ \hline
$7 $       & $1579.09  $       &$ 7  $ & $22.83  $       &$ l_3,l_4,l_5  $            &$185.0$          \\ \hline
$8 $       & $1579.09  $       &$ 8  $ & $22.86  $       &$ l_1,l_2,l_3  $            &$165.0$          \\ \hline
$9 $       & $1579.09  $       &$ 9  $ & $22.86  $       &$ l_2,l_3,l_4  $            &$124.1$          \\ \hline
$10 $      & $1714.69  $       &$ 10 $ & $23.14  $       &$ l_2,l_3,l_5  $            &$291.7$          \\ \hline
$11 $      & $2069.38  $       &$ 12 $ & $24.06  $       &$ l_1,l_4        $          &$516.6$           \\ \hline
$12 $      & $2081.35  $       &$ 14 $ &$ 24.68  $       &$ l_1,l_5        $          &$387.5$           \\ \hline
$13 $      & $2081.35  $       &$ 15 $ & $ 24.71 $       &$ l_4,l_5       $           &$353.4$           \\ \hline
$14 $      & $2086.23  $       &$ 16 $ &$ 23.86  $       & $l_2,l_5        $          &$322.2$          \\ \hline
$15 $      & $2086.23  $       &$ 17 $ &$ 23.88  $       & $l_3,l_5        $          &$530.0$           \\ \hline
$16 $      & $2086.23  $       &$ 18 $ &$ 23.89  $       & $l_2,l_3        $          &$172.6$           \\ \hline
$17 $      & $2235.96  $       &$ 19 $ &$ 27.25  $       & $l_1,l_3,l_4    $          &$1400.7$           \\ \hline
\end{tabular}
\end{table}
\vspace{-7pt}
\subsection{Comparison with the Conventional UFLS Scheme}

Fig. \ref{Fig: Comparison with conventional UFLS} provides a comparison between the MISOCP-based scheme and the conventional UFLS scheme while they are coping with the second islanding event \textcolor{myblue}{(with severity of $3200$~kW).} To have a more realistic result, we assume the communication latency to be $100$ ms in the proposed scheme. We also consider  the intentional delay of the UFLS relays to be $100$ ms. Since the distribution network is partitioned into four \M s in the conventional UFLS scheme, this figure compares the amount of load shedding, nadir frequency, and steady state frequency in each \M ~(\textcolor{myblue}{denoted by $m_1$ to $m_4$}), on the one hand, and the same indices in the multi-\M ~network which is obtained from the proposed MISOCP-based scheme, on the other hand.

Accordingly, the total amount of load shedding in our proposed scheme is $2725$ kW, while the steady state and nadir frequencies are remained within the permissible range. In comparison, the total amount of load shedding in the conventional scheme is $3700$ kW (even more than the initial power deficiency), and the frequency of the \M s violates the safe range. Specifically, in $m_3$, the violation of frequency is more serious, and the conventional scheme fails to maintain the frequency stability of the network. The main reason for this observation is the rigidity of the conventional UFLS scheme in dealing with different contingencies. In this scheme, load shedding is implemented in several steps with fixed sizes, regardless of the intensity of the islanding. Therefore, it can be inferred that the conventional method sheds non-optimal amount of loads encountering islanding events. These results illustrate that the proposed method is capable of preserving the distribution network from collapsing and moving it to a new steady state and stable condition. It is worth mentioning that, aside from the \textcolor{myblue}{COI frequency}, keeping the bus voltages and line flows within the permissible range in our proposed scheme would guarantee a secure operation following the islanding process, which is not considered in the conventional scheme.

\begin{figure} [t]
\includegraphics*[width=3.35 in, keepaspectratio=true]{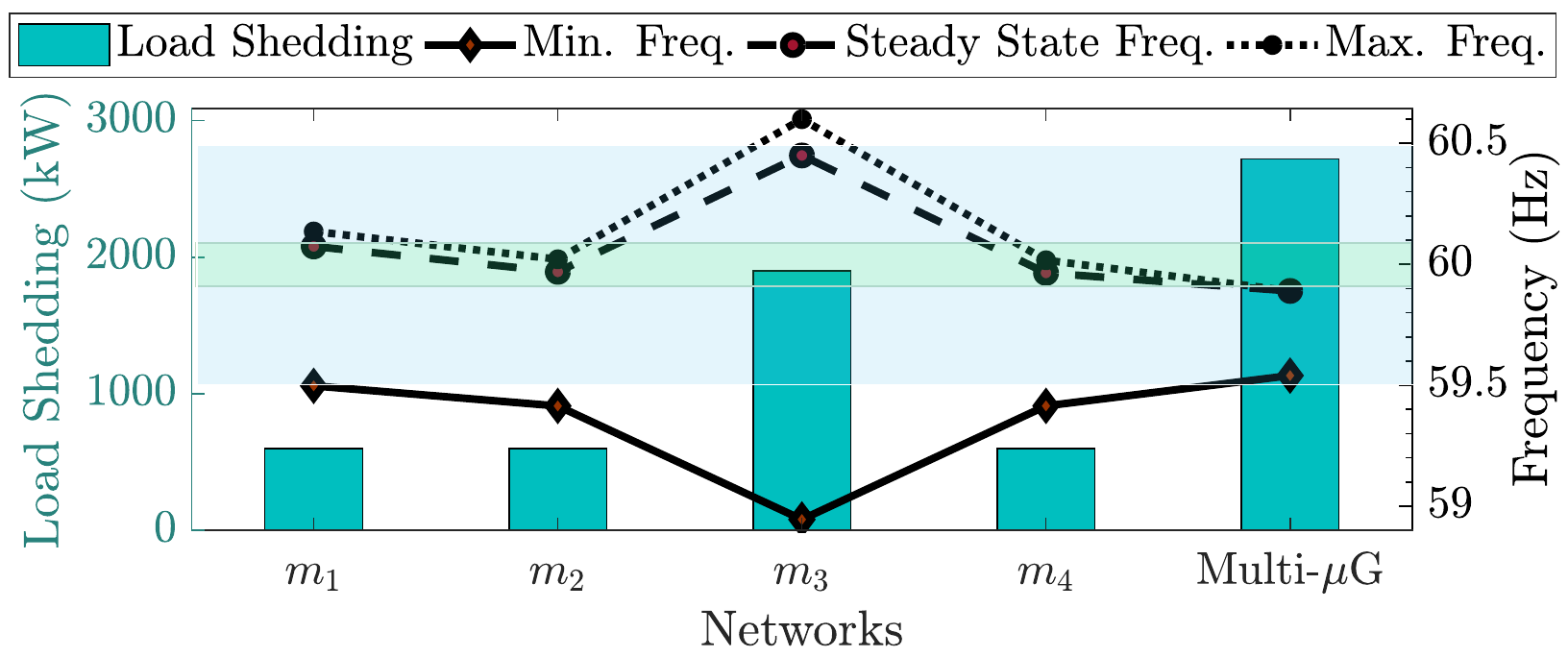}
    \centering
   \caption{Comparison between the proposed MISOCP-based and conventional UFLS schemes \textcolor{myblue}{for an islanding event with severity of $3200$~kW. Permissible ranges of nadir and steady state frequencies are shown by horizontal bars.}}
    \label{Fig: Comparison with conventional UFLS}
\end{figure}

\section{Conclusions} \label{Sec: Conclusions}
In this paper, we propose a novel framework for the near real-time operation as well as the real-time control of multi-\M ~networks. Our framework provides the optimal power flow, optimal load shedding, and optimal topology reconfiguration, while frequency dynamics and AC power flow limitations are taken into account. An exact reformulation of frequency constraints in a cutting plane algorithm with tight MISOCP relaxations is established, which significantly speeds up computation and achieves near optimal solution. To the best of our knowledge, this comprehensive optimization and control framework for the frequency stability of multi-\M s is proposed for the first time in the literature. Our numerical experiments further illustrate that the proposed emergency control scheme can successfully monitor, verify, and act to guarantee that the multi-\M ~network remains within the operational limits during post-islanding frequency dynamics. It is practical for real-world applications and outperforms the conventional UFLS scheme in terms of load shedding amount, number of curtailed customers, and frequency stability.

\bibliographystyle{IEEEtran}
\bibliography{References}


\end{document}